\newtheorem{thrm}{Theorem}[section]
\newtheorem{lmm}[thrm]{Lemma}
\newtheorem{prpstn}[thrm]{Proposition}
\newtheorem{dfntn}[thrm]{Definition}
\DeclareMathOperator{\rank}{rank}
\DeclareMathOperator{\diag}{diag}
\DeclareMathOperator{\argmin}{arg\ min}
\DeclareMathOperator{\polylog}{polylog}
\DeclareMathOperator{\dom}{dom}
\DeclareMathOperator{\Deriv}{D}
\DeclareMathOperator{\deriv}{d}
\DeclareMathOperator{\Hess}{Hess}
\DeclareMathOperator{\Grad}{grad}
\DeclareMathOperator{\proj}{P}
\DeclareMathOperator{\Tr}{tr}
\newcommand{\Manifold}{\ensuremath{\mathcal{M}_k}}
\newcommand{\TSpace}{\ensuremath{{T}}}
\newcommand{\RSpace}{\ensuremath{\mathbb{R}}}
\newcommand{\ST}{\ensuremath{\mathrm{St}}}
\newcommand{\norm}[1]{\ensuremath{\|#1\|}}
\newcommand{\normF}[1]{\norm{#1}_{\text{F}}}
\newcommand{\innprod}[2]{\ensuremath{\langle #1,\, #2 \rangle}}
\newcommand{\soR}{R^{(2)}}
\newcommand{\projTxM}{\textrm{P}_{\TSpace_X\Manifold}}
\newcommand{\projTxMp}{\textrm{P}_{\TSpace_X\Manifold}^{\textrm{p}}}
\newcommand{\projM}{\textrm{P}_{\Manifold}}
\newcommand{\defeq}{\colonequals}
\newcommand{\calU}{\ensuremath{\mathcal{U}}}
\newcommand{\calH}{\ensuremath{\mathcal{H}}}
\newenvironment{proof}{{\it Proof.}}{\hspace{\stretch{1}} $\square$}
\title{Low-rank matrix completion by Riemannian optimization---extended version}
\author{Bart Vandereycken\thanks{Chair of Numerical Algorithms and HPC, MATHICSE, \'Ecole Polytechnique F\'ed\'erale de Lausanne, Station 8, CH-1015 Lausanne, Switzerland ({\tt bart.vandereycken@epfl.ch}).} }
\date{September 15, 2012}
\begin{document}
	
	\maketitle

\begin{abstract}
The matrix completion problem consists of finding or approximating a low-rank matrix based on a few samples of this matrix. We propose a new algorithm for matrix completion that minimizes the least-square distance on the sampling set over the Riemannian manifold of fixed-rank matrices. The algorithm is an adaptation of classical non-linear conjugate gradients, developed within the framework of retraction-based optimization on manifolds. We describe all the necessary objects from differential geometry necessary to perform optimization over this low-rank matrix manifold, seen as a submanifold embedded in the space of matrices. In particular, we describe how metric projection can be used as retraction and how vector transport lets us obtain the conjugate search directions. Finally, we prove convergence of a regularized version of our algorithm under the assumption that the restricted isometry property holds for incoherent matrices throughout the iterations. The numerical experiments indicate that our approach scales very well for large-scale problems and compares favorably with the state-of-the-art, while outperforming most existing solvers. 

\emph{This report is the extended version of the manuscript~\cite{Vandereycken:2012d}. It differs only by the addition of Appendix~\ref{app}.}
\end{abstract} 

% \begin{keywords} 
% Matrix completion, low-rank matrices, optimization on manifolds, differential geometry, non-linear CG, Riemannian manifolds, Newton
% \end{keywords}

% \begin{AMS}
% 15A83, 65K05, 53B21
% \end{AMS}

% \pagestyle{myheadings}
% \thispagestyle{plain}
% \markboth{B.~VANDEREYCKEN}{MATRIX COMPLETION BY RIEMANNIAN OPTIMIZATION}
	
% **************************************************************************************************
\section{Introduction}\label{sec:intro}

Let $A \in \RSpace^{m \times n}$ be an $m \times n$ matrix that is only known on a subset $\Omega$ of the complete set of entries $\{1, \ldots, m\} \times \{1, \ldots, n\}$. The low-rank matrix completion problem \cite{Candes:2009b} consists of finding the matrix with lowest rank that agrees with $A$ on $\Omega$:
\begin{equation}\label{eq:optim_rank_nonoise}
\begin{array}{ll}
\underset{X}{\text{minimize}}  & \rank(X), \\ 
\text{subject to} & X \in \RSpace^{m \times n}, \ \proj_{\Omega}(X) =  \proj_{\Omega}(A),
\end{array}.
\end{equation}
where
\begin{equation}\label{eq:proj_omega}
 \proj_{\Omega} \colon \RSpace^{m \times n} \to \RSpace^{m \times n}, \  X_{i,j} \mapsto \begin{cases} X_{i,j} & \text{if $(i,j) \in \Omega$}, \\ 0 &  \text{if $(i,j) \not\in \Omega$}, \end{cases}
\end{equation}
denotes the orthogonal projection onto $\Omega$. Without loss of generality, we assume $m \leq n$.

Due to the presence of noise, it is advisable to relax the equality constraint in \eqref{eq:optim_rank_nonoise} to allow for misfit. Then, given a tolerance $\varepsilon \geq 0$, a more robust version of the rank minimization problem becomes
\begin{equation}\label{eq:optim_rank}
\begin{array}{ll}
\underset{X}{\text{minimize}}  & \rank(X), \\ 
\text{subject to} & X \in \RSpace^{m \times n}, \ \norm{ \proj_{\Omega}(X) - \proj_{\Omega}(A) }_{\textrm{F}} \leq \varepsilon,
\end{array}.
\end{equation}
where $\normF{X}$ denotes the Frobenius norm of $X$.

Matrix completion has a number of interesting applications such as collaborative filtering, system identification, and global positioning, but unfortunately it is NP hard. Recently, there has been a considerable body of work devoted to the identification of large classes of matrices for which  \eqref{eq:optim_rank_nonoise} has a unique solution that can be recovered in polynomial time. In \cite{Candes:2009}, for example, the authors show that when $\Omega$ is sampled uniformly at random, the nuclear norm relaxation
\begin{equation}\label{eq:optim_rank_nuclear}
\begin{array}{ll}
\underset{X}{\text{minimize}}  & \norm{X}_*, \\ 
\text{subject to} & X \in \RSpace^{m \times n}, \ \proj_{\Omega}(X) =  \proj_{\Omega}(A),
\end{array}
\end{equation}
can recover with high probability any matrix $A$ of rank $k$ that has so-called low incoherence, provided that the number of samples is large enough, $| \Omega | > C n k \polylog(n)$. Related work has been done by \cite{Candes:2009b, Keshavan:2010, Candes:2010, Keshavan:2010a}, which in particular also establish similar recovery results for the robust formulation \eqref{eq:optim_rank}.

Many of the potential applications for matrix completion involve very large data sets; the Netflix matrix, for example, has more than $10^8$ entries \cite{Netflix:07}. It is therefore crucial to develop algorithms that can cope with such a large-scale setting, but, unfortunately, solving \eqref{eq:optim_rank_nuclear} by off-the-shelf methods for convex optimization scales very badly in the matrix dimension. This has spurred a considerable amount of algorithms that aim to solve the nuclear norm relaxation by specifically designed methods that try to exploit the low-rank structure of the solution; see, e.g., \cite{Cai:2010, Ma:2011, Meka:2010, Lin:2009a, Goldfarb:2011, Toh:2010, Liu:2010}. Other approaches include optimization on the Grassmann manifold \cite{Keshavan:2010, Balzano:2010, Dai:2011}; atomic decompositions \cite{Lee:2010}, and non-linear SOR \cite{Wen:2010}.

% **************************************************************************************************

\subsection{The proposed method: optimization on manifolds}\label{sec:mat_compl}

We present a new method for low-rank matrix completion based on a direct optimization over the set of all fixed-rank matrices. By prescribing the rank of the global minimizer of \eqref{eq:optim_rank}, say $k$, the robust matrix completion problem is equivalent to
\begin{equation}\label{eq:optim}
\begin{array}{ll}
\underset{X}{\text{minimize}}  & f(X) := \tfrac{1}{2} \norm{ \proj_{\Omega}(X - A) }_{\textrm{F}}^2,  \\ 
\text{subject to} & X \in \mathcal{M}_k := \{ X \in \RSpace^{m \times n} : \rank(X) = k \}.
\end{array}
\end{equation}

It is well known that $\Manifold$ is a smooth ($C^\infty$) manifold; it can, for example, be identified as the smooth part of the determinantal variety of matrices of rank at most $k$ \cite[Proposition~1.1]{Bruns:1988}. Since the objective function $f$ is also smooth, problem \eqref{eq:optim} is a smooth optimization problem, which can be solved by methods from Riemannian optimization as introduced amongst others by \cite{Edelman:1999, Adler:2002, AMS2008}. Simply put, Riemannian optimization is the generalization of standard unconstrained optimization, where the search space is $\RSpace^n$, to optimization of a smooth objective function on a Riemannian manifold. 

For solving the optimization problem \eqref{eq:optim}, in principle any method from optimization on Riemannian manifolds could be used.  In this paper, we use a generalization of classical non-linear conjugate gradients (CG) on Euclidean space to perform optimization on manifolds; see, e.g., \cite{Smith:1994, Edelman:1999, AMS2008}. The reason for this choice compared to, say, Newton's method was that CG performed best in our numerical experiments. The skeleton of the proposed method, LRGeomCG, is listed in Algorithm~\ref{alg:lrgeomcg}.

Algorithm~\ref{alg:lrgeomcg} is derived using concepts from differential geometry, yet it closely resembles a typical non-linear CG algorithm with Armijo line-search for unconstrained optimization. It is schematically visualized in Figure~\ref{fig:optim} for iteration number $i$ and relies on the following crucial ingredients which will be explained in more detail for \eqref{eq:optim} in Section~\ref{sec:riem_optim}.
\begin{enumerate}	
\item The \emph{Riemannian gradient}, denoted $\Grad f(X_i)$, is a specific tangent vector $\xi_i$ which corresponds to the direction of steepest ascent of $f(X_i)$, but restricted to only directions in the tangent space $T_{X_i}\Manifold$.
\item The search direction $\eta_i \in T_{X_i}\Manifold$ is conjugate to the gradient and is computed by a variant of the classical Polak--Ribi\`ere updating rule in non-linear CG. This requires taking a linear combination of the Riemannian gradient with the previous search direction $\eta_{i-1}$. Since $\eta_{i-1}$ does not lie in $T_{X_i}\Manifold$, it needs to be transported to $T_{X_i}\Manifold$. This is done by a mapping $\mathcal{T}_{X_{i-1} \to X_i}: T_{X_{i-1}} \Manifold \to T_{X_{i}} \Manifold$, the so-called \emph{vector transport}.
\item As a tangent vector only gives a direction but not the line search itself on the manifold, a smooth mapping $R_{X_i}: T_{X_i}\Manifold \to \Manifold$, called the \emph{retraction}, is needed to map tangent vectors to the manifold. Using the conjugate direction $\eta_i$, a line-search can then be performed along the curve $t \mapsto R_{X_i}(t\, \eta_i)$.  Step 6 uses a standard backtracking procedure where we have chosen to fix the constants. More judiciously chosen constants can sometimes improve the line search, but our numerical experiments indicate that this is not necessary in the setting under consideration.
\end{enumerate}

\begin{algorithm}
  \caption{LRGeomCG: geometric CG for \eqref{eq:optim}}
\begin{algorithmic}[1]
  \REQUIRE initial iterate $X_1 \in \mathcal{M}_k$, tolerance $\tau > 0$, tangent vector $\eta_0 = 0$ %, sparse data matrix $M_{\Omega}$, tolerance $\tau$
\FOR{$i = 1,2,\ldots$}  
\STATE Compute the gradient  \\
 \quad $\xi_i := \Grad f(X_i)$ \COMMENT{see Algorithm~\ref{alg:gradient}} 
\STATE Check convergence \\
 \quad if $\norm{\xi_i} \leq \tau $ then break
\STATE Compute a conjugate direction by PR+   \\
 \quad $\eta_i := - \xi_i + \beta_i \,  \mathcal{T}_{X_{i-1} \to X_{i}}(\eta_{i-1})$    \COMMENT{see Algorithm~\ref{alg:beta_adjust}}
\STATE Determine an initial step $t_i$ from the linearized problem \\
 \quad $t_i = \argmin_t f(X_i + t \,\eta_i)$    \COMMENT{see Algorithm~\ref{alg:initial_guess_line_search}}
\STATE Perform Armijo backtracking to find the smallest integer $m \geq 0$ such that\\
 \quad $f(X_i) - f(R_{X_i}(0.5^m\, t_i\,\eta_i )) \geq - 0.0001 \times 0.5^m\, t_i\, \langle\, \xi_i, \eta_i \,  \rangle$\\
and obtain the new iterate \\
 \quad $X_{i+1} := R_{X_i}(0.5^m \,t_i\,\eta_i )$   \COMMENT{see Algorithm~\ref{alg:retraction}\phantom{\hspace{0.12cm}}}
\ENDFOR
 \end{algorithmic}\label{alg:lrgeomcg}
\end{algorithm}

\begin{figure}
	\vspace{0.3cm}
\centering
\def\svgwidth{0.60\columnwidth}
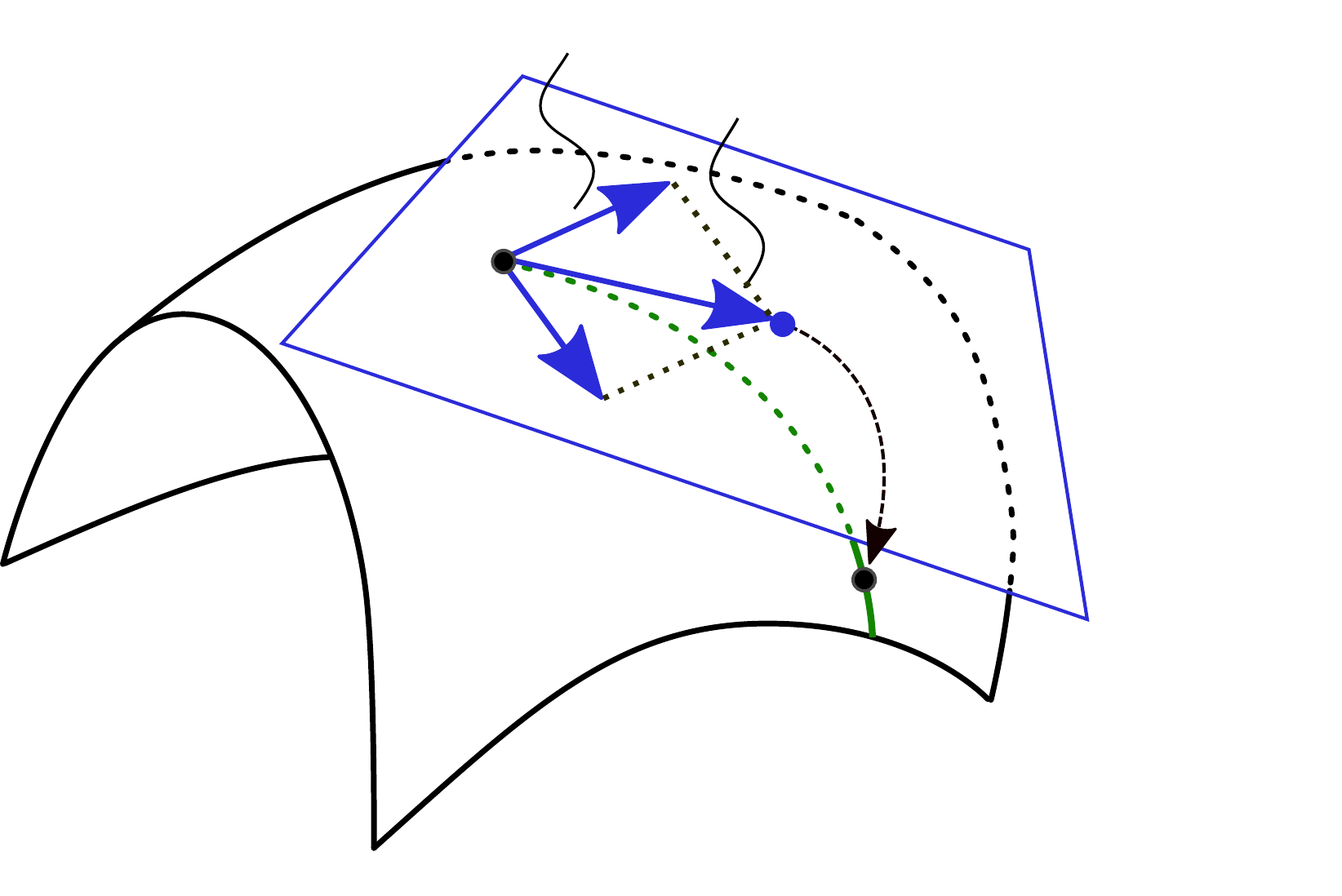
\caption{Visualization of Algorithm~\ref{alg:lrgeomcg}: non-linear CG on a Riemannian manifold.}\label{fig:optim}
\end{figure}

\subsection{Relation to existing manifold-related methods}
 
At the time of submitting the present work, a large number of other matrix completion solvers based on Riemannian optimization have been proposed in \cite{Dai:2010, Keshavan:2010,Keshavan:2010a, Boumal:2011, Mishra:2011b, Meyer:2011c, Shalit:2012, Meyer:2011a, Boumal:2012fk}. Like the current paper, all of these algorithms use the concept of retraction-based optimization on the manifold of fixed-rank matrices but they differ in their specific choice of Riemannian manifold structure and metric. It remains a topic of further investigation to assess the performance of these different geometries with respect to each other and to non-manifold based solvers. 

A first attempt at such a comparison has been very recently done in \cite{Mishra:2012fk} where the previous geometries were tested to complete one large matrix. The authors reach the conclusion that for gradient-based algorithms all these geometries perform remarkably more or less the same with respect to the total computational time. For the particular setup of \cite{Mishra:2012fk}, the $GH^T$ geometry of \cite{Meyer:2011a} turned out to result in the most efficient solver, but overall most geometries---including ours---outperformed the state-of-the art. In addition, Newton-based algorithms performed well when high precision was required and now the algorithm based on our embedded submanifold geometry was clearly faster.

\subsection{Outline of the paper}

The plan of the paper is as follows. In the next section, the necessary concepts of differential geometry are explained to turn Algorithm~\ref{alg:lrgeomcg} into a concrete method. The implementation of each step is explained in Section~\ref{sec:impl_details}. We also prove convergence of a slightly modified version of this method in Section~\ref{sec:convergence} under some assumptions which are reasonable for matrix completion. Numerical experiments and comparisons to the state-of-the art are carried out in Section~\ref{sec:num_exp}. The last section is devoted to conclusions.

% **************************************************************************************************
\section{Differential geometry for low-rank matrix manifolds} \label{sec:riem_optim}

In this section, we explain the differential geometry concepts used in Algorithm~\ref{alg:lrgeomcg} applied to our particular matrix completion problem \eqref{eq:optim}.

\subsection{The Riemannian manifold}

Let
\[
 \Manifold = \{ X \in \RSpace^{m \times n} \colon \rank(X) = k \}
\]
denote the manifold of fixed-rank matrices. Using the SVD, one has the equivalent characterization
\begin{equation}\label{eq:x_USV}
 \Manifold = \{  U \Sigma V^T \colon U \in \ST_k^m,\,  V \in \ST_k^n,\, \Sigma  = \diag(\sigma_i),\, \sigma_1 \geq \cdots \geq \sigma_k > 0 \},
\end{equation}
where $\ST_k^m$ is the Stiefel manifold of $m \times k$ real, orthonormal matrices, and $\diag(\sigma_i)$ denotes a diagonal matrix with $\sigma_i$ on the diagonal. Whenever we use the notation $U \Sigma V^T$ in the rest of the paper, we always mean matrices that satisfy \eqref{eq:x_USV}. Furthermore, the constants $k,m,n$ are always used to denote dimensions of matrices and, for simplicity, we assume $1 \leq k < m \leq n$.

The following proposition shows that $\Manifold$ is indeed a smooth manifold. While the existence of such a smooth manifold structure, together with its tangent space, is well known (see, e.g., \cite{HM94,Koch:2007} for applications of gradient flows on $\Manifold$), more advanced concepts like retraction-based optimization on $\Manifold$ have only very recently been investigated; see \cite{Shalit:2010,Shalit:2012}. In contrast, the case of optimization on \emph{symmetric} fixed-rank matrices has been studied in more detail in \cite{Orsi:2006,Journee:2010,Vandereycken2010,Meyer:2011}.

\begin{prpstn}
 The set $\Manifold$ is a smooth submanifold of dimension $(m+n-k)k$ embedded in $\RSpace^{m \times n}$. Its tangent space $\TSpace_X\Manifold$ at $X=U \Sigma V^T \in \Manifold$ is given by
  \begin{align}
   \TSpace_X\Manifold &= \left\{ \begin{bmatrix} U & U_\perp \end{bmatrix} \begin{bmatrix} \RSpace^{k \times k} & \RSpace^{k \times (n-k)} \\ \RSpace^{(m-k) \times k} & 0_{(m-k) \times (n-k)} \end{bmatrix}  \begin{bmatrix} V & V_\perp \end{bmatrix}^T  \right\} \label{eq:Tspace_full}\\
   &= \{ UMV^T + U_p V^T + UV_p^T \colon M \in \RSpace^{k \times k}, \label{eq:Tspace_short}\\
  &\qquad\qquad\qquad\qquad U_p \in \RSpace^{m \times k}, \,U_p^TU= 0, \,V_p \in \RSpace^{n \times k}, \,V_p^TV= 0 \}.\nonumber
  \end{align}
\end{prpstn}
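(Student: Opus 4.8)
The plan is to exhibit $\Manifold$ as a smooth submanifold by covering it with coordinate charts built from the SVD, and then to compute the tangent space by differentiating curves. I would prove both claims—smoothness and the tangent-space formula—together, since the second essentially drops out of the local parametrization used for the first.

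First I would construct a local parametrization near a fixed point $X_0 = U_0 \Sigma_0 V_0^T \in \Manifold$. The cleanest approach is to fix orthonormal completions so that $[U_0\ U_{0,\perp}]$ and $[V_0\ V_{0,\perp}]$ are orthogonal, and write a general nearby matrix in these coordinates as a block matrix $\begin{bmatrix} A & B \\ C & D \end{bmatrix}$ with $A \in \RSpace^{k \times k}$, $B \in \RSpace^{k \times (n-k)}$, $C \in \RSpace^{(m-k)\times k}$, $D \in \RSpace^{(m-k)\times(n-k)}$. Near $X_0$ the block $A$ is invertible, and the rank-$k$ condition is then equivalent to the vanishing of the Schur complement, $D - C A^{-1} B = 0$. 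This gives a clean description of $\Manifold$ locally as the zero set of the smooth map $F(A,B,C,D) = D - CA^{-1}B$, whose differential with respect to $D$ is the identity and hence surjective. By the submersion/implicit function theorem, $\Manifold$ is locally a smooth graph $D = CA^{-1}B$ over the free coordinates $(A,B,C)$, which number $k^2 + k(n-k) + (m-k)k = (m+n-k)k$. This simultaneously establishes smoothness and the dimension count.

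For the tangent space I would differentiate this local graph. A tangent vector is obtained by taking a smooth curve through $X_0$ and differentiating; in the block coordinates the free directions $(\dot A, \dot B, \dot C)$ are arbitrary while the constrained block satisfies $\dot D = \dot C\, A^{-1} B + C\, \tfrac{d}{dt}(A^{-1})\, B + C A^{-1} \dot B$, which at $X_0$ (where $B = 0$, $C = 0$, $A = \Sigma_0$) collapses to $\dot D = 0$. Hence the tangent space in these coordinates is exactly the set of block matrices with a free $k \times k$ block, free off-diagonal blocks, and zero lower-right block—which is precisely \eqref{eq:Tspace_full}. The equivalence with \eqref{eq:Tspace_short} is then a routine rewriting: setting $M = U^T(\cdot)V$, $U_p = U_\perp U_\perp^T(\cdot)V$, and $V_p = V V^T(\cdot)^T U_\perp U_\perp^T$ (equivalently, reading off the blocks), one checks the orthogonality conditions $U_p^T U = 0$ and $V_p^T V = 0$ hold by construction.

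The main obstacle, and the only place requiring genuine care, is handling the invertibility of $A$ and the vanishing of the Schur complement cleanly: one must verify that $A$ is invertible on a whole neighborhood (not just at $X_0$) so that $F$ is well-defined and smooth there, and that the locus $\rank \leq k$ intersected with this chart coincides exactly with $\{D = CA^{-1}B\}$ rather than merely containing it. This uses the standard fact that for a block matrix with invertible top-left block, the full rank equals $k$ plus the rank of the Schur complement. Everything else—the differential computation and the translation between the two presentations of $\TSpace_X\Manifold$—is routine linear algebra.
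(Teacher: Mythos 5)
Your proposal is correct and coincides with the paper's own route: the paper proves smoothness by citing Lee (2003), Example~8.14, whose argument is precisely your Schur-complement submersion $F(A,B,C,D) = D - CA^{-1}B$ in block coordinates adapted to $[U\ U_\perp]$, $[V\ V_\perp]$, and it gets the tangent space by differentiation (first-order perturbation of the SVD rather than of the graph $D = CA^{-1}B$, an immaterial difference that lands on \eqref{eq:Tspace_short} instead of \eqref{eq:Tspace_full} first). One small slip in your final paragraph: the extraction formula for $V_p$ should be $V_p = V_\perp V_\perp^T (\cdot)^T U = (I - VV^T)(\cdot)^T U$, not $V V^T (\cdot)^T U_\perp U_\perp^T$ (which has the wrong dimensions), though ``reading off the blocks,'' as you say parenthetically, gives the correct identification.
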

\begin{proof}
	See \cite[Example 8.14]{Lee2003} for a proof that uses only elementary differential geometry based on local submersions. The tangent space is obtained by the first-order perturbation of the SVD and counting dimensions.
\qquad\end{proof}		

The \emph{tangent bundle} is defined as the disjoint union of all tangent spaces,
\[
 \TSpace\Manifold := \bigcup_{X \in \Manifold} \{ X \} \times \TSpace_X\Manifold = \{ (X,\xi) \in \RSpace^{m \times n} \times \RSpace^{m \times n} \colon  X \in \Manifold, \,\xi \in \TSpace_X \Manifold \}.
\]
By restricting the Euclidean inner product on $\RSpace^{m \times n}$,
\[
\innprod{A}{B} = \Tr(A^T B) \quad \text{with $A,B \in \RSpace^{m \times n}$},
\]
to the tangent bundle, we turn $\Manifold$ into a Riemannian manifold with Riemannian metric
\begin{equation}\label{eq:metric}
 g_X(\xi, \eta) := \innprod{\xi}{\eta} = \Tr(\xi^T \eta) \quad \text{with $X \in \Manifold$ and $\xi,\eta \in \TSpace_X\Manifold$},
\end{equation}
where the tangent vectors $\xi,\eta$ are seen as matrices in $\RSpace^{m \times n}$.

Once the metric is fixed, the notion of the gradient of an objective function can be introduced. For a Riemannian manifold, the \emph{Riemannian gradient} of a smooth function $f:\Manifold \to \RSpace$ at $X \in \Manifold$ is defined as the unique tangent vector $\Grad f(X)$ in $T_X \Manifold$ such that
\[
 \langle\, \Grad f(X), \xi \, \rangle = \Deriv f(X)[ \xi ] \quad \text{for all $\xi \in T_X\Manifold$} ,
\]
where we denoted directional derivatives by $\Deriv f$. Since $\Manifold$ is embedded in $\RSpace^{m \times n}$, the Riemannian gradient is given as the orthogonal projection onto the tangent space of the gradient of $f$ seen as a function on $\RSpace^{m \times n}$; see, e.g., \cite[(3.37)]{AMS2008}. 
Defining $\proj_{U}:=UU^T$ and  $\proj_{U}^\perp:=I - \proj_{U}$ for any $U \in \ST_k^m$, we denote the orthogonal projection onto the tangent space at $X$ as
\begin{equation}\label{eq:P_X}
 \proj_{T_X \Manifold}: \RSpace^{m \times n} \to T_X\Manifold, \, Z \mapsto \proj_U Z \proj_V + \proj_U^\perp Z \proj_V + \proj_U Z \proj_V^\perp.
\end{equation}
Then, using $\proj_{\Omega}(X - A)$ as the (Euclidean) gradient of $f(X) = \norm{ \proj_{\Omega}(X - A) }_{\textrm{F}}^2 / 2$, we obtain
\begin{equation}\label{eq:riem_gradient}
 \Grad f(X) := \proj_{T_X \Manifold} (\proj_\Omega( X-A )).
\end{equation}

\subsection{Metric projection as retraction}

As explained in Section~\ref{sec:mat_compl}, we need a so-called retraction mapping to go back from an element in the tangent space to the manifold. In order to prove convergence within the framework of \cite{AMS2008}, this mapping has to satisfy certain properties such that it becomes a first-order approximation of the exponential mapping on $\Manifold$.

\begin{dfntn}[Retraction \protect{\cite[Definition~1]{Absil:2010}}]\label{def:retr}
  A mapping $R \colon \TSpace \Manifold \to \Manifold$ is said to be a \emph{retraction on $\Manifold$} if, for every $\overline{X} \in \Manifold$, there exists a neighborhood $\calU$ around $(\overline{X},0) \subset \TSpace\Manifold$ such that the following holds.
  \begin{enumerate}
    \item[(1)] $\calU \subseteq \dom(R)$ and $R\vert_{\calU}\colon \calU \to \Manifold$ is smooth.
    \item[(2)] $R(X,0) = X$ for all $(X,0) \in \calU$.
    \item[(3)] $\Deriv R(X,0)[0,\xi] = \xi$ for all $(X,\xi) \in \calU$.
  \end{enumerate}  
\end{dfntn}
We also introduce the following shorthand notation: 
\[
 R_X: \TSpace_X\Manifold \to \Manifold, \ \xi \mapsto R(X,\xi).
\]

In our setting, we have chosen metric projection as retraction:
\begin{equation}\label{eq:R_X}
 R_X:  \mathcal{U}_X \to \Manifold, \ \xi \mapsto \projM( X+\xi ) := \underset{Z \in \Manifold}{\argmin}\, \norm{X+\xi - Z}_{\text{F}}, 
\end{equation}
where $\mathcal{U}_X \subset T_X\Manifold$ is a suitable neighborhood around the zero vector, and $\projM$ is the orthogonal projection onto $\Manifold$. Based on \cite[Lemma~2.1]{Lewis2008}, it can be easily shown that this map satisfies the conditions in Definition~\ref{def:retr}.

The retraction as metric projection can be computed in closed-form by the SVD: Let $X \in \Manifold$ be given, then for sufficiently small $\xi \in \mathcal{U}_X \subset \TSpace_X \Manifold$, we have
\begin{equation}\label{eq:Rx_SVD}
 R_X(\xi) =  \projM( X+\xi ) = \sum_{i=1}^k \sigma_i u_i  v_i^T,
\end{equation}
where $\sigma_i, u_i, v_i$ are the (ordered) singular values and vectors of the SVD of $X + \xi$. It is obvious that when $\sigma_{k}=0$ there is no best rank-$k$ solution for \eqref{eq:R_X}, and additionally, when $\sigma_{k-1} = \sigma_{k}$ the minimization in \eqref{eq:R_X} does not have a unique solution. %It can be shown that when $\norm{\xi}_{\text{F}} < \sigma_k()$ 
In fact, since the manifold is non-convex, metric projections can never be well defined on the whole tangent bundle. Fortunately, as indicated in Definition~\ref{def:retr}, the retraction has to be
defined only locally because this is sufficient to establish convergence of the Riemannian algorithms.

\subsection{Riemannian Newton on $\Manifold$}

Although we do not exploit second-order information in Algorithm~\ref{alg:lrgeomcg}, Newton or its variants may be preferable for some applications. In \cite{Mishra:2012fk}, for example, it was shown that the second-order Riemannian trust-region algorithm of \cite{Absil:2007} performs very well in combination with our embedded submanifold geometry. We will therefore give the following theorem with an explicit expression of the Riemannian Hessian of $f$. Its proof is a straightforward but technical analog of that in \cite{Vandereycken2010} for symmetric fixed-rank matrices. It can be found in Appendix~\ref{app}.

\begin{prpstn}\label{prop:Hessian}
For any $X=U \Sigma V^T \in \Manifold$ satisfying \eqref{eq:x_USV} and $\xi \in T_X\Manifold$ satisfying \eqref{eq:Tspace_short}, the Riemannian Hessian of $f$ at $X$ in the direction of $\xi$ satisfies
\begin{align*}
 \Hess f(X)[\xi] &= \proj_U \proj_\Omega(\xi) \proj_V  + \proj_U^\perp \left[\proj_\Omega(\xi) + \proj_\Omega(X-A) V_p \Sigma^{-1} V^T \right]  \proj_V \\
 &\quad + \proj_U (\proj_\Omega(\xi) + U \Sigma^{-1}U_p^T\proj_\Omega(X-A))\proj_V^\perp.
\end{align*}
\end{prpstn}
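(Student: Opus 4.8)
The plan is to compute the Riemannian Hessian as the projection of the directional derivative of the Riemannian gradient, using the general formula for the Hessian of a function on an embedded submanifold. Recall from the excerpt that the Riemannian gradient is $\Grad f(X) = \proj_{T_X\Manifold}(\proj_\Omega(X-A))$. For an embedded submanifold of Euclidean space, the Riemannian Hessian acting on a tangent vector $\xi$ is obtained by differentiating the gradient vector field along $\xi$ and then projecting back onto the tangent space; concretely,
\[
 \Hess f(X)[\xi] = \proj_{T_X\Manifold}\bigl( \Deriv \bigl(\Grad f\bigr)(X)[\xi] \bigr),
\]
where $\Grad f$ is viewed as a map $\Manifold \to \RSpace^{m\times n}$. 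So the first step is to write $\Grad f(X)$ explicitly using the block decomposition \eqref{eq:P_X}, and the second step is to differentiate this expression in the direction $\xi$.

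To carry out the differentiation, I would parametrize a smooth curve $t\mapsto X(t)$ on $\Manifold$ with $X(0)=X$ and $X'(0)=\xi$, using the factored form $X(t)=U(t)\Sigma(t)V(t)^T$. The key ingredient is the first-order perturbation of the SVD factors: from the tangent-space description \eqref{eq:Tspace_short}, writing $\xi = UMV^T + U_pV^T + UV_p^T$, one reads off $\dot U = U_p$ (up to a skew term tangent to the Stiefel manifold that does not affect the final projected result), $\dot V = V_p$, and $\dot\Sigma = M$ restricted appropriately. These perturbation relations are what produce the terms $V_p\Sigma^{-1}V^T$ and $U\Sigma^{-1}U_p^T$ in the stated formula: they arise from differentiating the projectors $\proj_U=UU^T$ and $\proj_V=VV^T$ and from the inverse $\Sigma^{-1}$ appearing when one solves for the perturbation of $U,V$ in terms of $\xi$. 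I would then differentiate each of the three terms $\proj_U Z\proj_V$, $\proj_U^\perp Z\proj_V$, $\proj_U Z\proj_V^\perp$ in \eqref{eq:P_X} with $Z=\proj_\Omega(X-A)$, using the product rule and the derivatives $\dot{\proj}_U = U_pU^T + UU_p^T$ (and similarly for $\proj_V$), together with $\Deriv(\proj_\Omega(X-A))[\xi]=\proj_\Omega(\xi)$ since $\proj_\Omega$ is linear.

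After differentiating, I would project the resulting Euclidean derivative back onto $T_X\Manifold$ via $\proj_{T_X\Manifold}$ as in \eqref{eq:P_X}, which annihilates the $U_\perp(\cdot)V_\perp$ block and retains only the three surviving blocks; grouping terms by the projector sandwiches $\proj_U(\cdot)\proj_V$, $\proj_U^\perp(\cdot)\proj_V$, and $\proj_U(\cdot)\proj_V^\perp$ should reproduce the claimed expression. The main obstacle I anticipate is the careful bookkeeping of the SVD perturbation: one must correctly identify which pieces of $\dot U$, $\dot V$ lie tangent to the orthonormality constraint (and hence contribute skew-symmetric terms that get killed or rearranged by the projection) versus which genuinely contribute the $\Sigma^{-1}$ terms. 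Since the excerpt notes this is a ``straightforward but technical analog'' of \cite{Vandereycken2010}, I would lean on that reference for the symmetric case and adapt the computation to the nonsymmetric setting, where $U$ and $V$ must be perturbed independently rather than identically.
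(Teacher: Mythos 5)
Your proposal is correct, but it takes a genuinely different route from the paper's proof. The paper never differentiates the gradient vector field: it constructs an explicit second-order retraction $\soR_X$ with expansion $\soR_X(\xi)=X+\xi+U_p\Sigma^{-1}V_p^T+O(\norm{\xi}^3)$, expands the pullback $f\circ \soR_X$ to second order, reads off the quadratic term $\innprod{\Hess f(X)[\xi]}{\xi}$, and then reconstructs the operator from this quadratic form, fixing a free constant $c=1/2$ by imposing symmetry of the Hessian. You instead invoke the fact that on an embedded submanifold with the induced metric the Levi-Civita connection is the projected ambient derivative, so that $\Hess f(X)[\xi]=\proj_{T_X\Manifold}\bigl(\Deriv(\Grad f)(X)[\xi]\bigr)$ (this is available within the paper's framework, see \cite[Section~5.3]{AMS2008}), and differentiate $\Grad f(X)=\proj_{T_X\Manifold}(\proj_\Omega(X-A))$ along a curve. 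Your computation does close: writing $Z=\proj_\Omega(X-A)$, the terms involving $\dot{Z}=\proj_\Omega(\xi)$ assemble to $\proj_{T_X\Manifold}(\proj_\Omega(\xi))$, which is already tangent, while the projector derivatives contribute $\dot{\proj}_U Z\proj_V^\perp+\proj_U^\perp Z\dot{\proj}_V$, of which only $U\Sigma^{-1}U_p^T Z\proj_V^\perp$ and $\proj_U^\perp Z V_p\Sigma^{-1}V^T$ survive the final projection---exactly the two extra terms in the Proposition. Your route buys directness: no second-order retraction machinery, no quadratic-form-to-operator symmetrization step, and the operator appears in the stated block form immediately. The paper's route buys the retraction $\soR_X$ itself, which has independent value (it is also shown to agree with metric projection up to third order), and it sidesteps SVD-perturbation bookkeeping. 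One correction to that bookkeeping: as literally written, $\dot{U}=U_p$, $\dot{V}=V_p$, $\dot{\Sigma}=M$ is inconsistent with $\dot{X}=\xi$; matching terms forces $\dot{U}=U_p\Sigma^{-1}+US_U$ and $\dot{V}=V_p\Sigma^{-T}+VS_V$ with $S_U,S_V$ skew-symmetric. You flag this yourself, and it is harmless because the skew parts cancel in $\dot{\proj}_U=U_p\Sigma^{-1}U^T+U\Sigma^{-1}U_p^T$ (and similarly for $\proj_V$), which is all the computation uses; but the $\Sigma^{-1}$ factors must be present in $\dot{U}$, $\dot{V}$ from the start, not recovered afterwards.
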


\subsection{Vector transport}

Vector transport was introduced in \cite{AMS2008} and \cite{Qi:2010} as a means to transport tangent vectors from one tangent space to another. In a similar way as retractions are approximations of the exponential mapping, vector transport is the first-order approximation of parallel transport, another important concept in differential geometry.

The definition below makes use of the Whitney sum, which is the vector bundle over some space obtained as the direct sum of two vector bundles over that same space. In our setting, we can define it as
\begin{align*}
 T\Manifold \oplus T\Manifold &= \bigcup_{X \in \Manifold} \{ X \} \times \TSpace_X\Manifold \times \TSpace_X\Manifold \\
  &= \{ (X,\eta,\xi) \colon  X \in \Manifold, \xi \in \TSpace_X \Manifold, \eta \in \TSpace_X \Manifold  \}.
\end{align*}

\begin{dfntn}[Vector transport \protect{\cite[Definition 2]{Qi:2010}}]\label{def:vec_transp}
	A vector transport on the manifold $\Manifold$ is a smooth mapping: $T\Manifold \oplus T\Manifold \to T\Manifold, (X,\eta,\xi) \mapsto \mathcal{T}_{\eta}(\xi)$ satisfying the following properties for all $X \in \Manifold$.
  \begin{enumerate}
    \item[(1)] There exists a retraction $R$ such that, for all $(X,\eta, \xi) \in T\Manifold \oplus T\Manifold$, it holds that $\mathcal{T}_{\eta}(\xi) \in T_{R_X(\eta)} \Manifold$.
    \item[(2)] $\mathcal{T}_{0}(\xi) = \xi$ for all $\xi \in T_X\Manifold$.
    \item[(3)] For all $(X,\eta) \in T\Manifold$, the mapping $\mathcal{T}_{\eta}: T_X\Manifold \to T_{R_X(\eta)}\Manifold, \, \xi \mapsto \mathcal{T}_{\eta}(\xi)$ is linear.
  \end{enumerate}  
\end{dfntn}

By slight abuse of notation, we will use the following shorthand notation that emphasizes the two tangent spaces involved:
\[
 \mathcal{T}_{X \to Y}: T_X \Manifold \to T_Y \Manifold, \, \xi \mapsto \mathcal{T}_{R_X^{-1}(Y)}(\xi),
\]
where $\mathcal{T}_{R_X^{-1}(Y)}$ uses the notation of Definition~\ref{def:vec_transp}. See also Figure~\ref{fig:vec_transp} for an illustration of this definition. By the implicit function theorem, $R_X$ is locally invertible for every $X \in \Manifold$ but $R_X^{-1}(Y)$ does not need to exist for all $Y \in \Manifold$. Hence, $\mathcal{T}_{X \to Y}$ has to be understood locally, i.e., for $Y$ sufficiently close to $X$.

\begin{figure}
	\vspace{0.3cm}
\centering
\def\svgwidth{0.65\columnwidth}
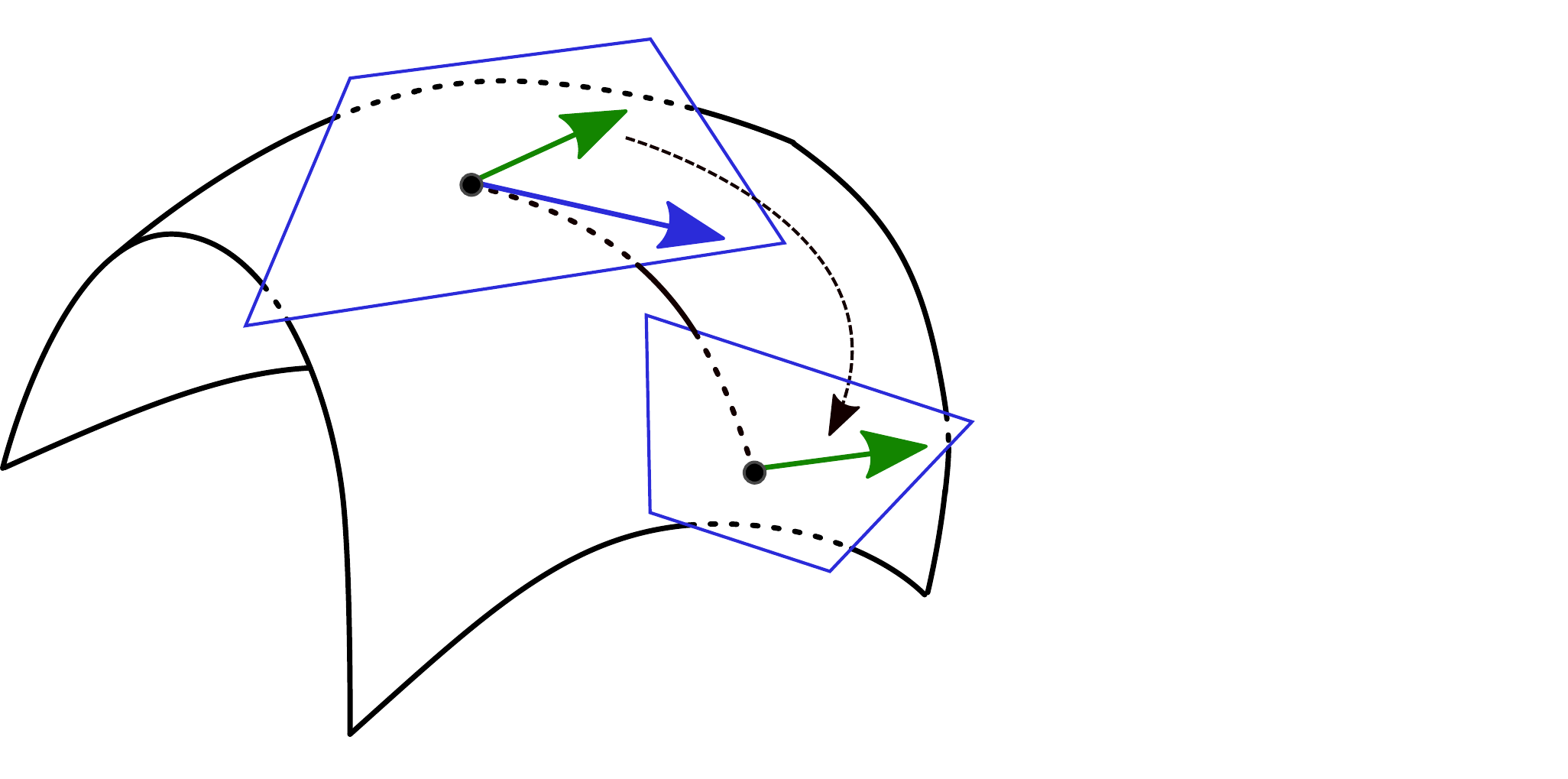
\caption{Vector transport on a Riemannian manifold.}\label{fig:vec_transp}
\end{figure}

Since $\Manifold$ is an embedded submanifold of $\RSpace^{m \times n}$, orthogonally projecting the translated tangent vector in $\RSpace^{m \times n}$ onto the new tangent space constitutes a vector transport; see \cite[Section 8.1.2]{AMS2008}. In other words, we get
\begin{equation}\label{eq:vector_transport}
 \mathcal{T}_{X \to Y}: T_X \Manifold \to T_Y \Manifold, \, \xi \mapsto \proj_{T_Y\Manifold}(\xi),
\end{equation}
with $\proj_{T_Y\Manifold}$ as defined in \eqref{eq:P_X}.

As explained in Section~\ref{sec:mat_compl}, the conjugate search direction $\eta_i$ in Algorithm~\ref{alg:lrgeomcg} is computed as a linear combination of the gradient and the previous direction:
\[
 \eta_i = - \Grad f(X_i) + \beta_i \,  \mathcal{T}_{X_{i-1} \to X_{i}}(\eta_{i-1}),
\]
where we transported  $\eta_{i-1}$. For $\beta_i$, we have chosen the geometrical variant of Polak--Ribi\`ere (PR+), as introduced in \cite[Chapter 8.2]{AMS2008}. Again using vector transport, this becomes
\begin{equation}
	\beta_i = \frac{ \langle \, \Grad f(X_i), \  \Grad f(X_i) - \mathcal{T}_{X_{i-1}\to X_i}(\Grad f(X_{i-1})) \, \rangle }{ \langle\ \Grad f(X_{i-1}),  \Grad f(X_{i-1}) \, \rangle }.
\end{equation}
In order to prove convergence, we also enforce that the search direction $\eta_i$ is sufficiently gradient-related in the sense that its angle with the gradient is never too small; see, e.g., \cite[Chap.~1.2]{Bertsekas:1999}.

% **************************************************************************************************

\section{Implementation details}\label{sec:impl_details}

This section is devoted to the implementation of Algorithm~\ref{alg:lrgeomcg}. For most operations, we will also provide a flop count for the low-rank regime, i.e., $k \ll m  \leq n$.

\begin{paragraph}{Low-rank matrices}
Since every element $X \in \mathcal{M}_k$ is a rank $k$ matrix, we store it as the result of a compact SVD:    
\[
 X = U \Sigma V^T,
\]
with orthonormal matrices $U \in \ST_k^m$ and $V \in \ST_k^n$, and a diagonal matrix $\Sigma \in \RSpace^{k \times k}$ with decreasing positive entries on the diagonal.
For the computation of the objective function and the gradient, we also precompute the sparse matrix $X_{\Omega} := \proj_{\Omega}(X)$. As explained in the next paragraph, computing $X_{\Omega}$ costs $(m + 2| \Omega |) k$ flops.
\end{paragraph}

\begin{paragraph}{Projection operator $\proj_{\Omega}$} 
During the course of Algorithm~\ref{alg:lrgeomcg}, we require the application of $\proj_{\Omega}$, as defined in \eqref{eq:proj_omega}, to certain low-rank matrices. By exploiting the low-rank form, this can be done efficiently as follows: Let $Z$ be a rank-$k_Z$ matrix with factorization $Z:=Y_1 Y_2^T$ ($k_Z$ is possibly different from $k$, the rank of the matrices in $\mathcal{M}_k$). Define $Z_{\Omega}:=\proj_{\Omega}(Z)$. Then element $(i,j)$ of $Z_{\Omega}$ is given by
\begin{equation}
(Z_{\Omega})_{i,j} =
\begin{cases}
 \sum_{l=1}^{k_Z} Y_1(i,l) Y_2(j,l) & \text{if $(i,j) \in \Omega$}, \\
 0 & \text{if $(i,j) \not\in \Omega$}.
\end{cases}  \label{eq:proj_omega_low_rank}
\end{equation}
The cost for computing $Z_{\Omega}$ is $2 | \Omega | k_Z$ flops.
\end{paragraph}

\begin{paragraph}{Tangent vectors} A tangent vector $\eta \in T_X \mathcal{M}_k$ at $X=U\Sigma V^T \in \mathcal{M}_k$ will be represented as
\begin{equation}\label{eq:tangent_vectors}
 \eta = U M V^T + U_p V^T + UV_p^T,
\end{equation}
where $M \in \RSpace^{k \times k}$, $U_p \in \RSpace^{m \times k}$ with $U^T U_p = 0$, and $V_p \in \RSpace^{n \times k}$ with $V^T V_p = 0$. After vectorizing this representation, the inner product $\langle \, \eta, \nu \, \rangle$ can be computed in $2(m+n)k + 2k^2$ flops.
 
Since $X$ is available as an SVD, the orthogonal projection onto the tangent space $T_X\mathcal{M}_k$ becomes
\[
  \proj_{T_X \Manifold}(Z) := \proj_U Z \proj_V + (Z-\proj_U Z) \proj_V + \proj_U (Z^T-\proj_V Z^T)^T,
\]
where $\proj_U := UU^T$ and $\proj_V:=VV^T$. If $Z$ can be efficiently applied to a vector, evaluating and storing the result of $\proj_{T_X \Manifold}(Z)$ as a tangent vector in the format \eqref{eq:tangent_vectors} can also be performed efficiently.
\end{paragraph}

\begin{paragraph}{Riemannian gradient} From \eqref{eq:riem_gradient}, the Riemannian gradient at $X=U \Sigma V^T$ is the orthogonal projection of $\proj_{\Omega}(X-A)$ onto the tangent space at $X$. Since the sparse matrix $A_{\Omega}= \proj_{\Omega}(A)$ is given and $X_{\Omega} = \proj_{\Omega}(X)$ was already precomputed, the gradient can be computed by Algorithm~\ref{alg:gradient}. The total cost is $2(n+2m)k^2+4|\Omega|k$ flops.

\begin{algorithm}
  \caption{Calculate gradient $\Grad f(X)$}\label{alg:gradient}
\begin{algorithmic}[1]
  \REQUIRE matrix $X=U\Sigma V^T \in \mathcal{M}_k$, sparse matrix $R = X_{\Omega} - A_{\Omega} \in \RSpace^{m \times n}$.
  \ENSURE $\Grad f(X) = U M V^T + U_p V^T + UV_p^T \in T_X\mathcal{M}_k$
\STATE $R_u \leftarrow R^T U$, \quad $R_v \leftarrow R V $  \COMMENT{$4|\Omega|k$ flops}
\STATE $M \leftarrow U^T R_v $ \COMMENT{$2mk^2$ flops}
\STATE $U_p \leftarrow R_v - U M$, \quad $V_p \leftarrow R_u - V M^T$ \COMMENT{$2(m+n)k^2$ flops\phantom{\hspace{0.12cm}}}
\end{algorithmic}
\end{algorithm}
\end{paragraph}

\begin{paragraph}{Vector transport} Let $\nu = U M V^T + U_p V^T + U V_p^T$ be a tangent vector at $X=U \Sigma V^T \in \mathcal{M}_k$. By \eqref{eq:vector_transport}, the vector
\[
\nu_+ := \mathcal{T}_{X \to X_+}(\nu) = \proj_{T_{X_+}\Manifold}(\nu)
\] 
is the transport of $\nu$ to the tangent space at some $X_+ = U_+ \Sigma_+ V_+^T$. It is computed by Algorithm~\ref{alg:transp_vector} with a total cost of approximately $14(m+n)k^2 + 10k^3$ flops. 

\begin{algorithm}
  \caption{Calculate vector transport $\mathcal{T}_{X \to X_+}(\nu)$}\label{alg:transp_vector}
\begin{algorithmic}[1]
  \REQUIRE matrices $X=U\Sigma V^T \in \mathcal{M}_k$ and  $X_+=U_+\Sigma_+ V_+^T \in \mathcal{M}_k$, \\tangent vector $\nu = U M V^T + U_p V^T + U V_p^T$.
  \ENSURE $\mathcal{T}_{X \to X_+}(\nu)= U_+ M_+ V_+^T + U_{p_+} V_+^T + U_+V_{p_+}^T \in T_{X_+}\mathcal{M}_k$
  \STATE $A_v \leftarrow V^TV_+, \quad A_u \leftarrow U^TU_+$    \COMMENT{$2(m+n)k^2$}
  \STATE $B_v \leftarrow V_p^TV_+, \quad B_u \leftarrow U_p^TU_+$    \COMMENT{$2(m+n)k^2$}
  \STATE $M_+^{(1)} \leftarrow A_u^T M A_v, \quad U_{+}^{(1)} \leftarrow U (M A_v), \quad V_{+}^{(1)} \leftarrow V (M^T A_u)$ \COMMENT{$6k^3 + 2(m+n)k^2$}
  \STATE $M_+^{(2)} \leftarrow B_u^T A_v, \quad U_{+}^{(2)} \leftarrow U_p A_v, \quad V_{+}^{(2)} \leftarrow V B_u $  \COMMENT{$2k^3 + 2(m+n)k^2$}
  \STATE $M_+^{(3)} \leftarrow A_u^T B_v, \quad U_{+}^{(3)} \leftarrow U B_v, \quad V_{+}^{(3)} \leftarrow V_p A_u$ \COMMENT{$2k^3 + 2(m+n)k^2$}
  \STATE $M_+ \leftarrow M_+^{(1)} + M_+^{(2)} + M_+^{(3)}$   \COMMENT{$2k^2$}
  \STATE $U_{p_+} \leftarrow U_{+}^{(1)} + U_{+}^{(2)} + U_{+}^{(3)}$, \quad $U_{p_+} \leftarrow U_{p_+} - U_+(U_+^T U_{p_+})$  \COMMENT{$4mk^2$}
  \STATE $V_{p_+} \leftarrow V_{+}^{(1)} + V_{+}^{(2)} + V_{+}^{(3)}$, \quad $V_{p_+} \leftarrow V_{p_+} - V_+(V_+^T V_{p_+})$  \COMMENT{$4nk^2$\phantom{\hspace{0.12cm}}}
\end{algorithmic}
\end{algorithm}
\end{paragraph}

\begin{paragraph}{Non-linear CG}
	The geometric variant of Polak--Ribi\`ere is implemented as Algorithm~\ref{alg:beta_adjust} costing about $28(m+n)k^2 + 20k^3$ flops. In order to improve robustness, we use the PR+ variant and restart when the conjugate direction is almost orthogonal to the gradient. 
	
\begin{algorithm}
  \caption{Compute the conjugate direction by PR+}
\begin{algorithmic}
  \REQUIRE previous iterate $X_{i-1}$, previous gradient $\xi_{i-1}$, previous direction $\eta_{i-1}$\\
  current iterate $X_i$, current gradient $\xi_{i}$
  \ENSURE conjugate direction $\eta_i \in T_{X_i}\mathcal{M}_k$
\STATE {\sc\small 1:} Transport previous gradient and direction to current tangent space:\\
  \qquad $\overline{\xi}_{i} \leftarrow \mathcal{T}_{X_{i-1} \to X_i}(\xi_{i-1})$    \COMMENT{apply Algorithm~\ref{alg:transp_vector}\phantom{\hspace{0.12cm}}} \\
  \qquad $\overline{\eta}_{i} \leftarrow \mathcal{T}_{X_{i-1} \to X_i}(\eta_{i-1})$  \COMMENT{apply Algorithm~\ref{alg:transp_vector}}
\STATE {\sc\small 2:} Compute conjugate direction:\\
  \qquad $\delta_i \leftarrow \xi_i - \overline{\xi}_i $  \\
  \qquad $\beta \leftarrow \max( 0, \langle \delta_i, \xi_i \rangle / \langle \xi_{i-1}, \xi_{i-1} \rangle )$ \\
  \qquad $\eta_i \leftarrow  - \xi_i + \beta\, \overline{\eta}_i$
\STATE {\sc\small 3:} Compute angle between conjugate direction and gradient:\\
  \qquad $\alpha \leftarrow \langle \eta_i, \xi_i \rangle / \sqrt{ \langle \eta_i, \eta_i \rangle \ \langle \xi_i, \xi_i \rangle }$
\STATE {\sc\small 4:} Reset to gradient if desired:\\
  \qquad if $\alpha \leq 0.1$ then $\eta_i \leftarrow \xi_i$
\end{algorithmic}\label{alg:beta_adjust}
\end{algorithm}

\end{paragraph}

\begin{paragraph}{Initial guess for line search}

Although Algorithm~\ref{alg:lrgeomcg} uses line search, a good initial guess can greatly enhance performance. We observed in our numerical experiments that an exact minimization on the tangent space alone (so, neglecting the retraction), 
\begin{equation}\label{eq:initial_guess_Omega}
 \min_t f(X + t \,\eta) = \frac{1}{2} \min_t \norm{ \proj_{\Omega}(X) + t \proj_{\Omega}(\eta) - \proj_{\Omega}(A) }_{\text{F}}^2,
\end{equation}
performed extremely well as initial guess in the sense that backtracking is almost never necessary.

Equation \eqref{eq:initial_guess_Omega} is essentially a one-dimensional least-square fit for $t$ on $\Omega$. The closed-form solution of the minimizer $t_*$ satisfies
\[
 t_* = \langle\, \proj_{\Omega}(\eta), \proj_{\Omega}(A-X) \, \rangle \, / \, \langle\, \proj_{\Omega}(\eta), \proj_{\Omega}(\eta) \, \rangle
\]
and is unique when $\eta \neq 0$. As long as Algorithm~\ref{alg:lrgeomcg} has not converged, $\eta$ will always be non-zero since it is the direction of search. The solution to \eqref{eq:initial_guess_Omega} is performed by Algorithm~\ref{alg:initial_guess_line_search}. In the actual implementation, the sparse matrices $N$ and $B$ are stored as the non-zero entries on $\Omega$. Hence, the total cost is about $2mk^2 + 4|\Omega|(k+1)$ flops.

\begin{algorithm}
  \caption{Compute the initial guess for line search $t_* = \argmin_t f(X + t \,\eta)$}
\begin{algorithmic}
\REQUIRE iterate $X=U \Sigma V^T$ and projection $X_{\Omega}$, tangent vector $\eta=UMV^T + U_p V^T + U V_p^T$, sparse matrix $R = A_{\Omega} - X_{\Omega} \in \RSpace^{m \times n}$
  \ENSURE step length $t_*$
  \STATE  {\sc\small 1:} $N \leftarrow \proj_{\Omega}(\begin{bmatrix} UM+U_p  & U \end{bmatrix} \begin{bmatrix} V & V_p  \end{bmatrix}^T)$   \COMMENT{$2mk^2 + 4 |\Omega |k$ flops}
  \STATE  {\sc\small 2:} $t_*  \leftarrow  \Tr(N^T R) / \Tr(N^T N)$   \COMMENT{$4|\Omega|$ flops\phantom{\hspace{0.12cm}}}
\end{algorithmic}\label{alg:initial_guess_line_search} 
\end{algorithm}

\end{paragraph}

\begin{paragraph}{Retraction} As shown in \eqref{eq:Rx_SVD}, the retraction \eqref{eq:R_X} can be directly computed by the SVD of $X + \xi$. A full SVD of $X + \xi$ is, however, prohibitively expensive since it costs $O(n^3)$ flops. Fortunately, the matrix to retract has the particular form
\[
 X + \xi = \begin{bmatrix} U & U_p \end{bmatrix} \begin{bmatrix} \Sigma + M & I_k \\ I_k & 0 \end{bmatrix} \begin{bmatrix} V & V_p \end{bmatrix}^T,
\]
with $X=U \Sigma V^T \in \Manifold$ and $\xi=UMV^T + U_p V^T + U V_p^T \in T_X\Manifold$. Algorithm~\ref{alg:retraction} now performs a compact QR and an SVD of a small $2k$-by-$2k$ matrix to reduce the flop count to $14(m+n)k^2 + C_{\text{SVD}}k^3$ when $k \ll \min(m,n)$.

Observe that the listing of Algorithm~\ref{alg:retraction} uses {\sc Matlab} notation to denote common matrix operations. In addition, the flop count of computing the SVD in Step 3 is given as $C_{\text{SVD}} k^3$ since it is difficult to estimate beforehand in general In practice, the constant $C_{\text{SVD}}$ is modest, say, less than 200. Furthermore, in Step 4, we have added $\varepsilon_{\text{mach}}$ to $\Sigma_s$ so that, in the unlucky event of zero singular values, the retracted matrix is perturbed to a rank-$k$ matrix in $\Manifold$.

\begin{algorithm}
  \caption{Compute the retraction by metric projection}
\begin{algorithmic}
  \REQUIRE iterate $X=U \Sigma V^T$, tangent vector $\xi=UMV^T + U_p V^T + U V_p^T$
  \ENSURE retraction $R_X(\xi) = \proj_{\Manifold}(X+\xi) = U_+ \Sigma_+ V_+^T$
  \STATE  {\sc\small 1:}  $(Q_u, R_u) \leftarrow \text{qr}(U_p, 0)$, \quad $(Q_v, R_v) \leftarrow \text{qr}(V_p, 0)$  \COMMENT{$10(m+n)k^2$ flops}
  \STATE  {\sc\small 2:} $S \leftarrow \begin{bmatrix} \Sigma + M & R_v^T \\ R_u & 0 \end{bmatrix}$ 
  \STATE  {\sc\small 3:}  $(U_s,\Sigma_s,V_s) \leftarrow \text{svd}(S)$  \COMMENT{$C_{\text{SVD}} k^3$ flops}
  \STATE  {\sc\small 4:}  $\Sigma_+ \leftarrow \Sigma_s(1:k,1:k) + \varepsilon_{\text{mach}}$
  \STATE  {\sc\small 5:}  $U_+ \leftarrow \begin{bmatrix} U & Q_u \end{bmatrix} U_s(\,:\,,1:k)$, \quad $V_+ \leftarrow \begin{bmatrix} V & Q_v \end{bmatrix} V_s(\,:\,,1:k)$ \COMMENT{$4(m+n)k^2$ flops\phantom{\hspace{0.12cm}}}
\end{algorithmic}\label{alg:retraction}
\end{algorithm}

\end{paragraph}

\begin{paragraph}{Computational cost}
	
Summing all the flop counts for Algorithm~\ref{alg:lrgeomcg}, we arrive at a cost per iteration of approximately
\[
 (48m+44n)k^2 + 10 | \Omega | k + \text{nb}_{\text{Armijo}}\, (8(m+n)k^2 + 2| \Omega | k),
\]
where $\text{nb}_{\text{Armijo}}$ denotes the average number of Armijo backtracking steps. It is remarkable, that in all experiments below we have observed that $\text{nb}_{\text{Armijo}}=0$, that is, backtracking was never needed.

For our typical problems in Section~\ref{sec:num_exp}, the size of the sampling set satisfies $| \Omega | = \text{OS}\,(m+n-k)k$ with $ \text{OS} >2$ the oversampling factor. When $m=n$ and $\text{nb}_{\text{Armijo}}=0$, this brings the total flops per iteration to
\begin{equation}\label{eq:total_flops_theory}
 t_\text{theoretical} = (92 + 20 \,\text{OS}) nk^2.
\end{equation}
Comparing this theoretical flop count with experimental results, we observe that sparse matvecs and applying $\proj_{\Omega}$ require significantly more time than predicted by \eqref{eq:total_flops_theory}. This can be explained due to the lack of data locality for these sparse matrix operations, whereas the majority of the remaining time is spent by dense linear algebra, rich in BLAS3. After some experimentation, we estimated that for our {\sc Matlab} environment these sparse operations are penalized by a factor of about $C_\text{sparse} \simeq 5$. For this reason, we normalize the theoretical estimate \eqref{eq:total_flops_theory} to obtain the following practical estimate:
\begin{equation}\label{eq:total_flops_practice}
  t_\text{practical}^{\text{LRGeomCG}} =  ( 92  +  C_\text{sparse}\, 20\, \text{OS} ) nk^2, \quad C_\text{sparse} \simeq 5.
\end{equation}
For a sensible value of $\text{OS}=3$, we can expect that about $75$ percent of the computational time will be spent on operations with sparse matrices.
\end{paragraph}

\begin{paragraph}{Comparison to existing methods}

The vast majority of specialized solvers for matrix completion require computing the dominant singular vectors of a sparse matrix in each step of the algorithm. This is, for example, the case for approaches using soft-  and hard-thresholding, like in \cite{Cai:2010, Ma:2011, Meka:2010, Lin:2009a, Goldfarb:2011, Toh:2010, Liu:2010, Boumal:2011, Mishra:2011b}. Typically, PROPACK from \cite{Larsen:2004} is used for computing such a truncated SVD. The use of sparse matrix-vector products and the potential convergence issues frequently makes this the computationally most expensive part of all these algorithms.

On the other hand, our algorithm first projects any sparse matrix onto the tangent space and then computes the dominant singular vectors for a small $2k$-by-$2k$ matrix. Apart from the application of $\proj_{\Omega}$ and \emph{a few} sparse matrix-vector products, the rest of the algorithm solely consists of dense linear algebra operations. This is more robust and significantly faster than computing the SVD of a sparse matrix in each step. To the best of our knowledge, LMAFit \cite{Wen:2010} and other manifold-related algorithms like \cite{Balzano:2010, Dai:2011, Keshavan:2010, Keshavan:2010a, Meyer:2011} are the only competitive solvers where mostly dense linear algebra together with a few sparse matrix-vector products are sufficient.

Due to the difficulty of estimating practical flop counts for algorithms based on computing a sparse SVD, we will only compare the computational complexity of our method with that of LMAFit. An estimate similar to the one of above, reveals that LMAFit has a computational cost of
\begin{equation}\label{eq:total_flops_lma}
 t_\text{practical}^{\text{LMAFit}} =  ( 18  +  C_\text{sparse}\, 12\, \text{OS} ) nk^2, \quad C_\text{sparse} \simeq 5,
\end{equation}
where the sparse manipulations involve $2k$ sparse matvecs and one application of $\proj_{\Omega}$ to a rank $k$ matrix.

Comparing this estimate to \eqref{eq:total_flops_practice}, LMAFit should be about two times faster per iteration when $\text{OS}=3$. As we will see later in Tables~\ref{tab:conv_rank_known} and~\ref{tab:conv_rank_known_noisy}, this is almost exactly what we observe experimentally. Since LRGeomCG is more costly per iteration, it will have to converge much faster in order to compete with LMAFit. This is studied in detail in Section~\ref{sec:num_exp}.

\end{paragraph}

\section{Convergence for a modified cost function}\label{sec:convergence}

In this section, we show the convergence of Algorithm~\ref{alg:lrgeomcg} applied to a \emph{modified} cost function.

\subsection{Reconstruction on the tangent space}

As first step, we apply the general convergence theory for Riemannian optimization in \cite{AMS2008}. In particular, since $R_X$ is a smooth retraction, the Armijo-type line search together with the gradient-related search directions, allows us to conclude that any limit point of Algorithm~\ref{alg:lrgeomcg} is a critical point.
\begin{prpstn}\label{prop:accum_is_critical}
Let ${X_i}$ be an infinite sequence of iterates generated by Algorithm \ref{alg:lrgeomcg}. Then, every accumulation point ${X_*}$  of ${X_i}$ satisfies $\proj_{T_{X_*} \Manifold} \proj_{\Omega}(X_*) = \proj_{T_{X_*} \Manifold} \proj_{\Omega}(A)$.
\end{prpstn}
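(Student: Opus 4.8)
The plan is to read the claimed identity as the statement that $X_*$ is a \emph{critical point} of $f$ on $\Manifold$, and then to obtain criticality directly from the general global convergence theory for retraction-based line-search methods in \cite{AMS2008}. Recall from \eqref{eq:riem_gradient} that $\Grad f(X) = \proj_{T_X\Manifold}(\proj_\Omega(X-A))$; since $\proj_{T_X\Manifold}$ and $\proj_\Omega$ are linear, the condition $\Grad f(X_*) = 0$ is exactly $\proj_{T_{X_*}\Manifold}\proj_\Omega(X_*) = \proj_{T_{X_*}\Manifold}\proj_\Omega(A)$. Hence it suffices to show that every accumulation point $X_* \in \Manifold$ of the sequence satisfies $\Grad f(X_*)=0$.

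To invoke the convergence theorem (e.g. \cite[Thm~4.3.1]{AMS2008}), I would verify that the three ingredients of Algorithm~\ref{alg:lrgeomcg} meet its hypotheses. First, $R_X$ is a genuine retraction on $\Manifold$: this was established after \eqref{eq:R_X} via \cite[Lemma~2.1]{Lewis2008}, so the smoothness requirement of Definition~\ref{def:retr} holds. Second, the accepted step sizes satisfy the Armijo sufficient-decrease inequality, which is precisely the backtracking test enforced in Step~6 of Algorithm~\ref{alg:lrgeomcg}. Third, the search directions must be \emph{gradient-related} in the sense of \cite[Def.~4.2.1]{AMS2008}; this is guaranteed by the angle-based restart in Step~4 of Algorithm~\ref{alg:beta_adjust}, which resets $\eta_i$ to the gradient direction whenever the cosine $\alpha$ between $\eta_i$ and $\xi_i = \Grad f(X_i)$ falls below the fixed threshold. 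This restart enforces a uniform bound on $\lvert\cos\angle(\eta_i,\xi_i)\rvert$, so that $\eta_i$ never becomes nearly orthogonal to the gradient; together with the boundedness of the transported directions, this is exactly the gradient-relatedness hypothesis. With these three facts the theorem yields $\Grad f(X_*)=0$ at every accumulation point, and expanding the gradient as above gives the claim.

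The one technical point that needs care is that the metric-projection retraction \eqref{eq:R_X} is only \emph{locally} defined: it may fail to exist or be unique when singular values of $X+\xi$ vanish or coalesce. This is harmless for the present statement because $X_* \in \Manifold$ is, by definition, a matrix of full rank $k$. By Definition~\ref{def:retr} and the smoothness of metric projection in a neighborhood of $X_*$, the retraction and all of the algorithm's maps are well defined and smooth on a neighborhood of $(X_*,0)$, so the hypotheses of the convergence theorem hold along the relevant subsequence converging to $X_*$. The genuinely delicate issue---that a decreasing sequence of $f$ on the non-closed manifold $\Manifold$ may have no rank-$k$ accumulation point at all, with its smallest singular value drifting to zero---is precisely what motivates the modified cost function of the next subsection and lies outside the scope of this conditional statement, which only asserts criticality once such an $X_*$ is assumed to exist.
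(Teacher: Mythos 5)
Your proof is correct and follows essentially the same route as the paper: invoke Theorem~4.3.1 of \cite{AMS2008} (justified by the smooth metric-projection retraction, the Armijo backtracking of Step~6, and the gradient-relatedness enforced by the angle restart in Algorithm~\ref{alg:beta_adjust}) to conclude that every accumulation point is a critical point, then expand $\Grad f(X_*)=\proj_{T_{X_*}\Manifold}\proj_\Omega(X_*-A)=0$ by linearity. The paper's own proof is just a terser version of this argument; your added verification of the theorem's hypotheses and the remark on the local well-definedness of the retraction near the full-rank point $X_*$ are sound refinements, not deviations.
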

\begin{proof}
	From Theorem~4.3.1 in \cite{AMS2008}, every accumulation point is a critical point of the cost function $f$ of Algorithm~\ref{alg:lrgeomcg}. These critical points ${X_*}$ are determined by $\Grad f(X)=0$. By \eqref{eq:riem_gradient} this gives $\proj_{T_{X_*} \Manifold} \proj_{\Omega}(X_*-A) = 0$.
\qquad\end{proof}		

The next step is establishing that there exist limit points. However, since $\mathcal{M}_k$ is open---its closure are the matrices of rank \emph{bounded} by $k$---it is not guaranteed that any infinite sequence has indeed a limit point in $\mathcal{M}_k$.  To guarantee that there exists at least one such point in the sequence of iterates of Algorithm~\ref{alg:lrgeomcg}, we want to exploit that these iterates stay in a compact subset of $\mathcal{M}_k$.

It does not seem possible to show this for the objective function $f$ of Algorithm \ref{alg:lrgeomcg} without modifying the algorithm or making further assumptions. We therefore show that \emph{this property holds when Algorithm~\ref{alg:lrgeomcg} is applied not to $f$ but the regularized version}
\[
 g \colon \mathcal{M} \to \RSpace, \, X \mapsto f(X) +  \mu^2(\normF{X^\dagger}^2 + \normF{X}^2), \quad \mu>0.
\]
Since every $X \in \mathcal{M}_k$ is of rank $k$, the pseudo-inverse is smooth on $\mathcal{M}_k$ and hence $g(X)$ is also smooth. Using \cite[Thm.~4.3]{Golub:1973}, one can show that the gradient of $X \mapsto \normF{X^\dagger}^2 + \normF{X}^2$ equals $2U(\Sigma - \Sigma^{-3})V^T$ for $X=U\Sigma V^T$; hence, $\Grad g(X)$ can be computed very cheaply after modifying step 2 in Algorithm~\ref{alg:gradient}.

\begin{prpstn}\label{prop:conv_reg}
Let ${X_i}$ be an infinite sequence of iterates generated by Algorithm \ref{alg:lrgeomcg} but with the objective function $g(X) = f(X) + \mu^2(\normF{X^\dagger}^2 + \normF{X}^2)$ for some $0<\mu<1$. Then, $\lim_{i \to \infty} \proj_{T_{X_i} \Manifold} \proj_{\Omega}(X_i - A) =  2 \mu^2 (X^\dagger_i + X_i)$.
\end{prpstn}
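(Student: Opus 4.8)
The plan is to reduce the statement to the single assertion $\Grad g(X_i) \to 0$ and then read off the claimed identity by splitting the Riemannian gradient of $g$ into its data-misfit and regularization parts. First I would record $\Grad g$ explicitly. Since $f$ and the regularizer are both smooth on $\Manifold$, linearity gives $\Grad g(X) = \Grad f(X) + \mu^2 \Grad(\normF{X^\dagger}^2 + \normF{X}^2)$. By \eqref{eq:riem_gradient} the first term equals $\proj_{\TSpace_X\Manifold}\proj_\Omega(X-A)$, and by the formula cited from \cite{Golub:1973} the Euclidean gradient of the regularizer is $2U(\Sigma - \Sigma^{-3})V^T$, which is already of the form $UMV^T$ and hence lies in $\TSpace_X\Manifold$; the tangent projection therefore acts as the identity on it. Consequently $\Grad g(X) = \proj_{\TSpace_X\Manifold}\proj_\Omega(X-A) + 2\mu^2 U(\Sigma - \Sigma^{-3})V^T$, so that moving the regularization gradient to the other side of $\Grad g(X_i) \to 0$ produces exactly the right-hand side of the proposition.

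The substantive step is to guarantee that $\Grad g(X_i)$ really tends to zero along the \emph{entire} sequence, and this is where the regularization earns its keep. Because the Armijo backtracking in Step~6 enforces monotone decrease, all iterates remain in the sublevel set $\mathcal{L} = \{ X \in \Manifold : g(X) \leq g(X_1) \}$. Using $f \geq 0$ I would bound $g(X) \geq \mu^2 \sum_{j=1}^k (\sigma_j^2 + \sigma_j^{-2})$, so that $g(X) \leq g(X_1)$ forces every singular value of every iterate into a fixed interval $[\sigma_{\min}, \sigma_{\max}]$ with $\sigma_{\min} = \sigma_{\min}(\mu, g(X_1)) > 0$ and $\sigma_{\max} < \infty$. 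Together with the compactness of the Stiefel factors $U \in \ST_k^m$ and $V \in \ST_k^n$, this shows that $\mathcal{L}$ is contained in a compact subset of $\Manifold$; crucially, the uniform lower bound on the singular values keeps every accumulation point at rank exactly $k$, so no iterate can drift toward the excluded lower-rank boundary. This is precisely the property that failed for the unregularized $f$.

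With the iterates confined to a compact set on which $g$ is smooth, I would invoke the convergence theory for retraction-based line search with Armijo steps and gradient-related directions (Theorem~4.3.1 of \cite{AMS2008}, already used for Proposition~\ref{prop:accum_is_critical}), reinforced by the angle safeguard $\alpha > 0.1$ in Step~4 of Algorithm~\ref{alg:beta_adjust}, which makes the search directions \emph{uniformly} gradient-related. A Zoutendijk-type argument on the compact set then upgrades the ``every accumulation point is critical'' conclusion to the full limit $\lim_{i\to\infty}\Grad g(X_i) = 0$. Substituting the explicit form of $\Grad g$ from the first step and rearranging yields the stated relation.

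I expect the main obstacle to be the compactness argument of the second paragraph: one must verify not merely that the singular values stay bounded, but that they stay bounded \emph{away from zero}, uniformly in $i$, so that the limit set cannot escape $\Manifold$. The interplay of the two regularizing terms $\normF{X}^2$ and $\normF{X^\dagger}^2$ is exactly what supplies the upper and lower bounds simultaneously, and pinning down the admissible interval $[\sigma_{\min}, \sigma_{\max}]$ for an arbitrary fixed $\mu > 0$ is the delicate part of the construction.
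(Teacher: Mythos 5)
Your proposal is correct and follows essentially the same route as the paper's proof: Armijo monotone decrease confines the iterates to the sublevel set of $g$, where the $\mu^2\normF{X}^2$ term bounds the singular values above and the $\mu^2\normF{X^\dagger}^2$ term bounds them below by $\mu/\sqrt{g(X_1)}>0$, so the iterates lie in a compact subset of $\Manifold$, and then the Riemannian line-search theory (Theorem~4.3.1 of \cite{AMS2008}) gives the conclusion via the explicit formula $\Grad g(X) = \proj_{T_X\Manifold}\proj_\Omega(X-A) + 2\mu^2 U(\Sigma-\Sigma^{-3})V^T$. The only difference is the final upgrade from ``every accumulation point is critical'' to the full limit $\Grad g(X_i)\to 0$: the paper argues by contradiction, extracting from any subsequence with gradient norm bounded below a convergent sub-subsequence whose limit would be a non-critical accumulation point, whereas you invoke a Zoutendijk-type argument with the angle safeguard; both are valid on the compact set and the distinction is immaterial.
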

\begin{proof}
  We will first show that the iterates stay in a closed and bounded subset of $\mathcal{M}_k$. Let ${L} = \{ X \in \mathcal{M}_k: g(X) \leq g(X_0) \}$ be the level set at $X_0$. By construction of the line search, all $X_i$ stay inside ${L}$ and we get
  \[
   \frac{1}{2}\normF{\proj_{\Omega}(X_i-A)}^2 + \mu^2 \normF{X^\dagger_i}^2 + \mu^2 \normF{X_i}^2 \leq C_0^2, \quad i > 0,
  \]
  where $C_0^2:= g(X_0)$. This implies
  \[
   \mu^2 \normF{X_i}^2 \leq C_0^2 - \frac{1}{2} \normF{\proj_{\Omega}(X_i-A)}^2 - \mu^2 \normF{X^\dagger_i}^2 \leq C_0^2
  \]
  and we obtain an upper bound on the largest singular value of each $X_i$:
  \[
   \sigma_1(X_i) \leq \normF{X_i} \leq C_0/\mu =: C^\sigma.
  \]
  Similarly, we get a lower bound on the smallest singular value:
  \[
    \mu^2 \normF{X^\dagger_i}^2 = \sum_{j=1}^k \frac{\mu^2 }{\sigma^2_j(X_i)} \leq C_0^2 -  \frac{1}{2} \normF{\proj_{\Omega}(X_i-A)}^2 - \mu^2 \normF{X_i}^2 \leq C_0^2,
  \]
  which implies that 
  \[
  \sigma_k(X_i) \geq \mu/C_0 =: C_\sigma.
  \]
  It is clear that all $X_i$ stay inside the set
  \[
   {B} = \{ X \in \mathcal{M}_k: \sigma_1(X) \leq C^\sigma ,\  \sigma_k(X) \geq C_\sigma \}.
  \]
  This set is closed and bounded, hence compact. 
  
  Now suppose that $\lim_{i\to\infty}\normF{\Grad g(X_i)} \neq 0$. Then there is a subsequence in $\{ X_i \}_{i\in\mathcal{K}}$ such $\normF{\Grad g(X)} \geq \varepsilon > 0$ for all $i\in\mathcal{K}$. Since $X_i \in  {B}$, this subsequence $\{ X_i \}_{i\in\mathcal{K}}$ has a limit point $X_*$ in ${B}$. By continuity of $\Grad g$, this implies that $\normF{\Grad g(X_*)} \geq \varepsilon$ which contradicts Theorem~4.3.1 in \cite{AMS2008} that every accumulation point is a critical point of $g$. Hence, $\lim_{i\to\infty}\normF{\Grad g(X_i)} = 0$.
\qquad\end{proof}			

In theory, $\mu^2$ can be very small, so one can choose it arbitrarily small, for example, as small as $\varepsilon_{\text{mach}} \simeq 10^{-16}$. This means that as long as $\sigma_1(X_i) = O(1)$ and $\sigma_k(X_i) \gg \varepsilon_{\text{mach}}$, the regularization terms in $g$ are negligible and one might as well have optimized the original objective function $f$, that is, $g$ with $\mu=0$. This is what we observed in all numerical experiments when $\rank(A) \geq k$. In case $\rank(A) < k$, it is obvious that now $\sigma_k(X_i) \to 0$ as $i \to \infty$. Theoretically one can still optimize $g$, although in practice it makes more sense to transfer the original optimization problem to the manifold of rank $k-1$ matrices. Since these rank drops do not occur in our experiments, proving convergence of such a method is beyond the scope of this paper. 

A similar argument appears in the analysis of other methods for rank constrained optimization too. For example, the proof of convergence in \cite{BurMon2005} for SDPLR \cite{Burer:2003} uses a regularization by $\mu \det(\Sigma)$ with $\Sigma$ containing the non-zero singular values of $X$; yet in practice, ones optimizes without this regularization using the observation that $\mu$ can be made arbitrarily small and in practice one always observes convergence. Analogous modifications also appear in \cite{Keshavan:2010a, Boumal:2011, Boumal:2012fk}.

\subsection{Reconstruction on the whole space}

Based on Proposition~\ref{prop:conv_reg}, we have that, for a suitable choice of $\mu$, any limit point satisfies 
\[
\normF{\proj_{T_{X_*} \Manifold} \proj_{\Omega}(X_* - A)} \leq \varepsilon, \quad \varepsilon > 0.
\]
In other words, we will have approximately fitted $X$ to the data $A$ on the image of $\proj_{T_{X_*} \Manifold} \proj_\Omega$ and, hopefully, this is sufficient to have $X_*=A$ on the whole space $\RSpace^{m \times n}$. Without further assumptions on $X_*$ and $A$, however, it is not reasonable to expect that $X_*$ equals $A$ since $\proj_{\Omega}$ usually has a very large null space. 

Let us split the error $E := X-A$ into the three quantities
\begin{align*}
	E_1 = \proj_{T_{X} \Manifold} \proj_\Omega (X-A), \ \ 
	E_2 = \proj_{N_{X} \Manifold} \proj_\Omega (X-A), \ \ \text{ and }\ \ 
	E_3 = \proj^\perp_\Omega (X-A),
\end{align*}
where $N_{X} \Manifold$ is the normal space of $X$, i.e., the subspace of all matrices $Z \in \RSpace^{m \times n}$ orthogonal to $T_{X} \Manifold$. 

Upon convergence of Algorithm~\ref{alg:lrgeomcg}, $\normF{E} \to \varepsilon$, but how big can $E_2$ be? In general, $E_2$ can be arbitrary large since it is the Lagrange multiplier of the fixed rank constraint of $X$. In fact, when the rank of $X$ is smaller than the exact rank of $A$, $E_2$ typically never vanishes, even though $E_1$ can converge to zero. On the other hand, when the ranks of $X$ and $A$ are the same, one typically observes that $\normF{E} \to 0$ implies $\normF{E} \to 0$. This can be easily checked during the course of the algorithm, since $\proj_\Omega (X-A) = E_1 + E_2$ is computed explicitly for the computation of the gradient in Algorithm~\ref{alg:gradient}. 

Suppose now that $\normF{E_1} \leq \varepsilon$ and $\normF{E_2} \leq \tau$, which means that $X$ is in good agreement with $A$ on $\Omega$. The hope is that $X$ and $A$ agree on the complement of $\Omega$ too. This is of course not always the case, but it is exactly the crucial assumption of low-rank matrix completion that the observations on $\Omega$ are sufficient to complete $A$. To quantify this assumption, one can make use of standard theory in matrix completion literature; see, e.g., \cite{Candes:2009, Candes:2009b, Keshavan:2010}. Since it is not the purpose of this paper to analyze the convergence properties of Algorithm~\ref{alg:lrgeomcg} for various, rather theoretical random models, and we do not rely on this theory in the numerical experiments later on, we omit this discussion.

\section{Numerical experiments} \label{sec:num_exp}

The implementation of LRGeomCG, i.e., Algorithm~\ref{alg:lrgeomcg}, was done in {\sc Matlab} R2012a on a desktop computer
with a 3.10 GHz CPU and 8 GB of memory. All reported times are wall-clock  time that include the setup phase of the solvers (if needed) but exclude setting up the (random) problem. 

As observed in \cite{Boumal:2011}, the performance of most matrix completion solvers behaves very differently for highly non-square matrices; hence, as is mostly done in the literature, we focus only on square matrices. In our numerical experiments below we only compare with LMAFit of \cite{Wen:2010}. Extensive comparison with other approaches based in \cite{Cai:2010,Ma:2011,Meka:2010,Goldfarb:2011,Lin:2009a,Toh:2010,Liu:2010} revealed that LMAFit is overall the fastest method for square matrices. We refer to \cite{Michenkova:2011} for these results. 

The implementation of LMAFit was the one provided by the authors\footnote{Version of August 14, 2012 downloaded from \url{http://lmafit.blogs.rice.edu/}.}. All options were kept the same, except rank adaptivity was turned off. In Sections~\ref{sec:exp_noise} and~\ref{sec:exp_decay} we also changed the stopping criteria as explained there. In order to have a fair comparison between LMAFit and LRGeomCG, the operation $\proj_\Omega$ was performed by the same {\sc Matlab} function \verb|part_XY.m| in both solvers. In addition, the initial guesses were taken as random rank $k$ matrices (more on this below).

In the first set of experiments, we will complete random low-rank matrices which are generated as proposed in \cite{Cai:2010}. First, construct two random matrices $A_L, A_R \in \RSpace^{n \times k}$ with i.i.d.\ standard Gaussian entries; then, assemble $A \defeq A_L A_R^T$; finally, the set of observations $\Omega$ is sampled uniformly at random among all sets of cardinality $|\Omega|$. The resulting observed matrix to complete is now $A_{\Omega} \defeq \proj_{\Omega}(A)$. Standard random matrix theory asserts that $\normF{A} \simeq n \sqrt{k}$ and $\normF{A_{\Omega}} \simeq \sqrt{|\Omega|k}$. In the later experiments, the test matrices are different and their construction will be explained there.

When we report on the relative error of an approximation $X$, we mean the error
\begin{equation}\label{eq:rel_residual}
 \text{relative error} = \normF{X - A} / \normF{A} 
\end{equation}
computed on all the entries of $A$. On the other hand, the algorithms will compute a relative residual on $\Omega$ only:
\begin{equation}\label{eq:rel_error}
 \text{relative residual} = \normF{\proj_{\Omega}(X - A) } / \normF{ \proj_{\Omega}(A)}.
\end{equation}
Unless stated otherwise, we set as tolerance for the solvers a relative residual of $10^{-12}$. Such a high precision is necessary to study the asymptotic convergence rate of the solvers, but where appropriate we will draw conclusions for moderate precisions too. 

As initial guess $X_1$ for Algorithm~\ref{alg:lrgeomcg}, we construct a random rank-$k$ matrix following the same procedure as above for $M$. The oversampling factor (OS) for a rank-$k$ matrix is defined as the ratio of the number of samples to the degrees of freedom in a non-symmetric matrix of rank $k$,
\begin{equation}\label{eq:OS}
 \text{OS} = | \Omega | / (k(2n-k)).
\end{equation}
Obviously one needs at least $\text{OS} \geq 1$ to uniquely complete any rank-$k$ matrix. In the experiments below, we observe that, regardless of the method, $\text{OS} > 2$ is needed to reliably recover an incoherent matrix of rank $k$ after uniform sampling. In addition, each row and each column needs to be sampled at least once. By the coupon's collector problem, this requires that $| \Omega | > C n \log(n)$ but in all experiments below, $\Omega$ is always sufficiently large such that each row and column is sampled at least once. Hence, we will only focus on the quantity $\text{OS}$.

\subsection{Influence of size and rank for fixed oversampling}

As first test, we complete random matrices $A$ of exact rank $k$ with LRGeomCG and LMAFit and we explicitly exploit the knowledge of the rank in the algorithms. The purpose of this test is to investigate the dependence of the algorithms on the size and the rank of the matrices. In Section~\ref{sec:exp_oversampling}, we will study the influence of $\text{OS}$, but for now we fix the oversampling to $\text{OS}=3$. In Table~\ref{tab:conv_rank_known}, we report on the mean run time and the number of iterations taking over $10$ random instances.  The run time and the iteration count of all these instances are visualized in Figures~\ref{fig:its_rank_known} and \ref{fig:times_rank_known}, respectively.

Overall, LRGeomCG needs less iterations than LMAFit for all problems. In Figure~\ref{fig:its_rank_known}, we see that this is because LRGeomCG converges faster asymptotically although the convergence of LRGeomCG exhibits a slower transient behavior in the beginning. This phase is, however, only limited to the first 20 iterations. Since the cost per iteration for LMAFit is cheaper than for LRGeomCG (see the estimates~\eqref{eq:total_flops_practice} and~\eqref{eq:total_flops_lma}), this transient phase leads to a trade-off between runtime and precision. This is clearly visible in the timings of Figure~\ref{fig:times_rank_known}: When sufficiently high accuracy is requested, LRGeomCG is always faster, whereas for low precision, LMAFit is faster.

Let us now check in more detail the dependence on the size $n$. It is clear from the left sides of Table~\ref{tab:conv_rank_known} and Figure~\ref{fig:its_rank_known} that the iteration counts of LMAFit and LRGeomCG grow with $n$ but seem to stagnate as $n \to \infty$. In addition, LMAFit needs about three times more iterations than LRGeomCG. Hence, even though each iteration of LRGeomCG is about two times more expensive than one iteration of LMAFit, LRGeomCG will eventually always be faster than LMAFit for sufficiently high precision. In Figure~\ref{fig:times_rank_known} on the left, one can, for example, see that this trade-off point happens at a precision of about $10^{-5}$. For growing rank $k$, the conclusion of the same analysis is very much the same.
 
The previous conclusion depends critically on the convergence factor and, as we will see later on, this factor is determined by the amount of oversampling $\text{OS}$. But for difficult problems ($\text{OS}=3$ is near the lower limit of $2$), we can already observe that LRGeomCG is about 50\% faster than LMAFit.

\begin{figure}
  \begin{minipage}[b]{0.49\linewidth}
    \centering
    \includegraphics[width=\linewidth]{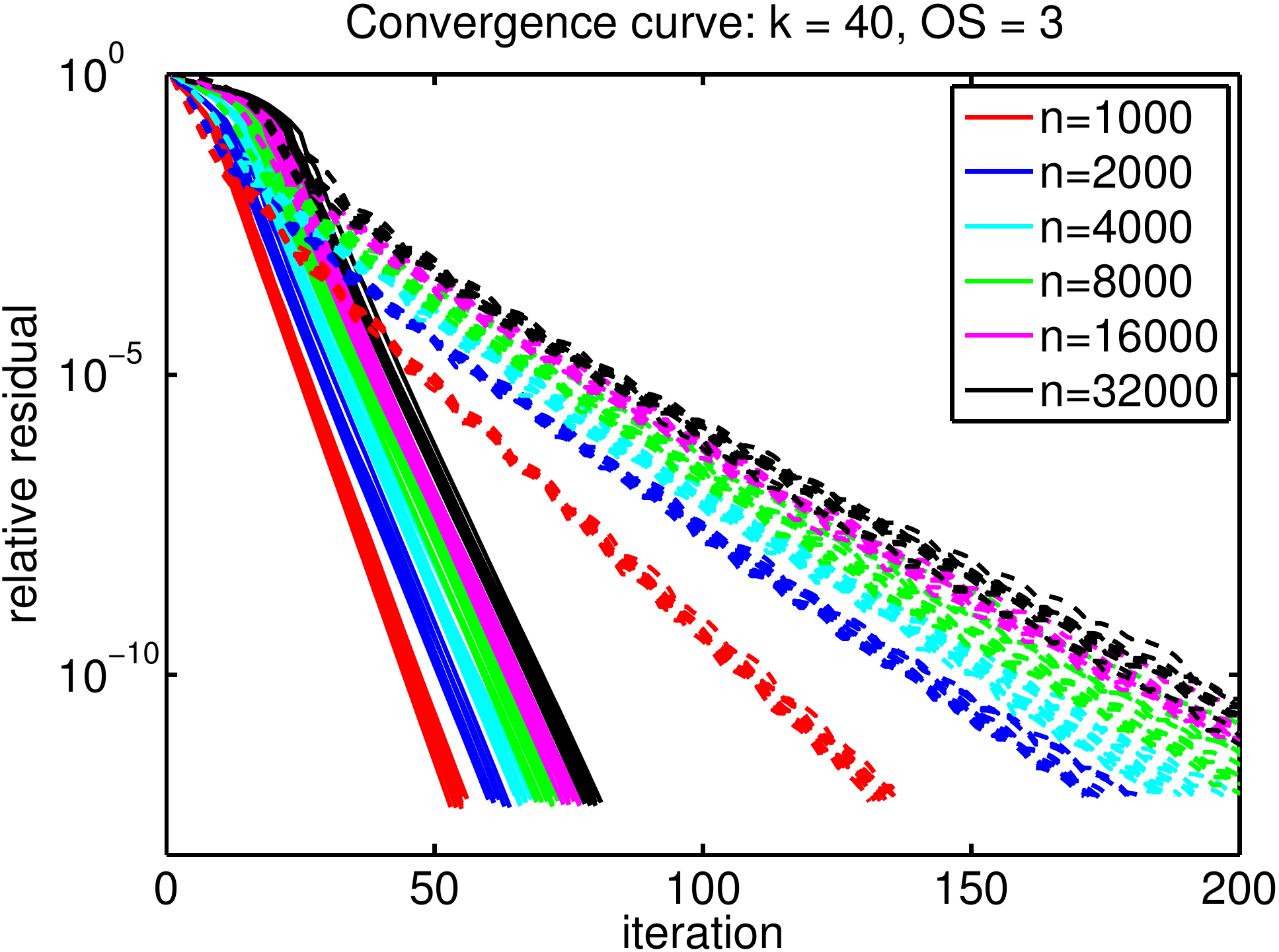}
  \end{minipage}
  \hfill
  \begin{minipage}[b]{0.49\linewidth}
    \centering
    \includegraphics[width=\linewidth]{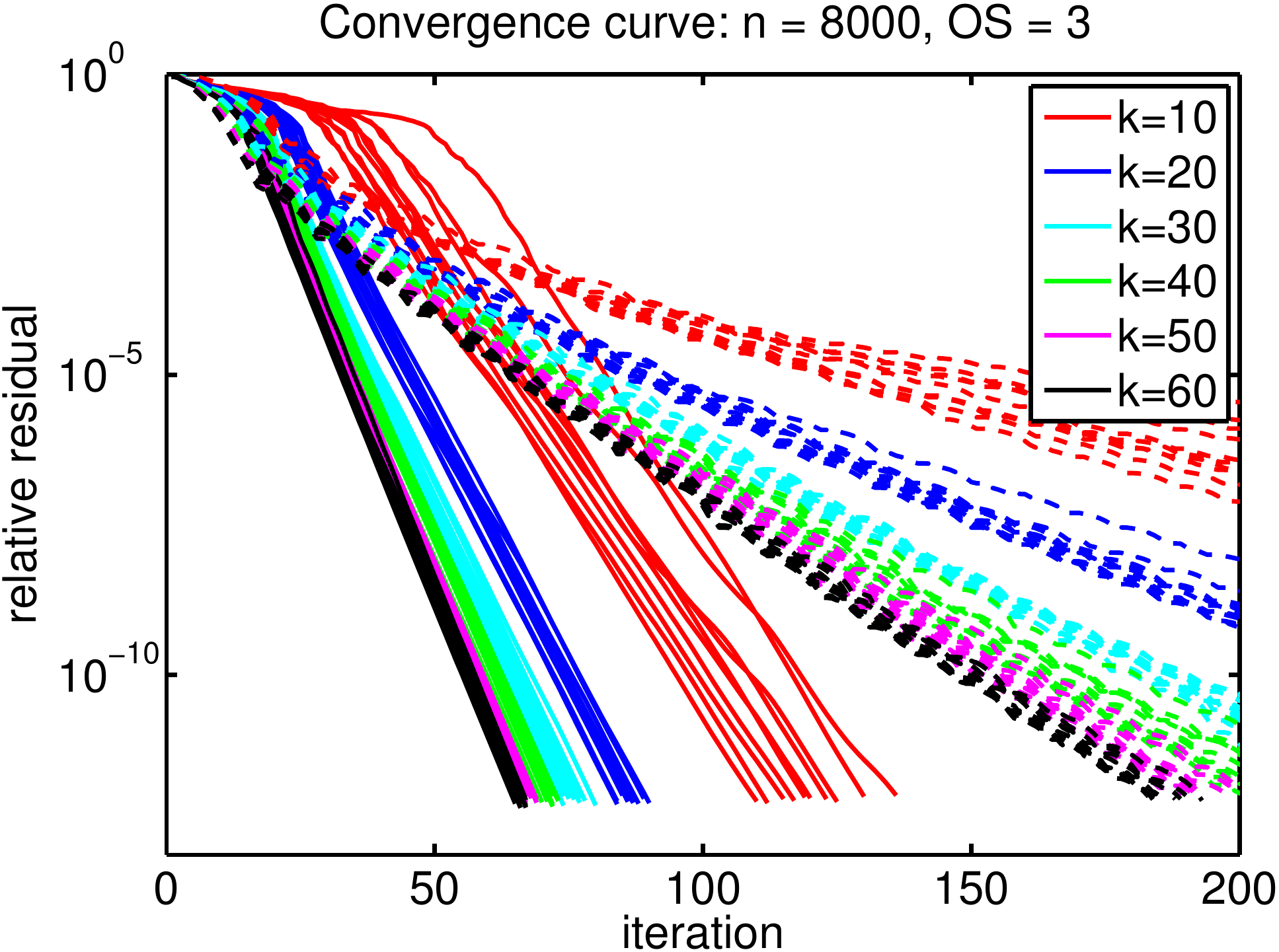}
  \end{minipage}  
  \caption{Convergence curves for LMAFit (dashed line) and LRGeomCG (full line) for fixed oversampling of $\text{OS}=3$. Left: variable size $n$ and fixed rank $k=40$; right: variable ranks $k$ and fixed size $n=8000$.}\label{fig:its_rank_known}
\end{figure}

\begin{figure}
  \begin{minipage}[b]{0.49\linewidth}
    \centering
    \includegraphics[width=\linewidth]{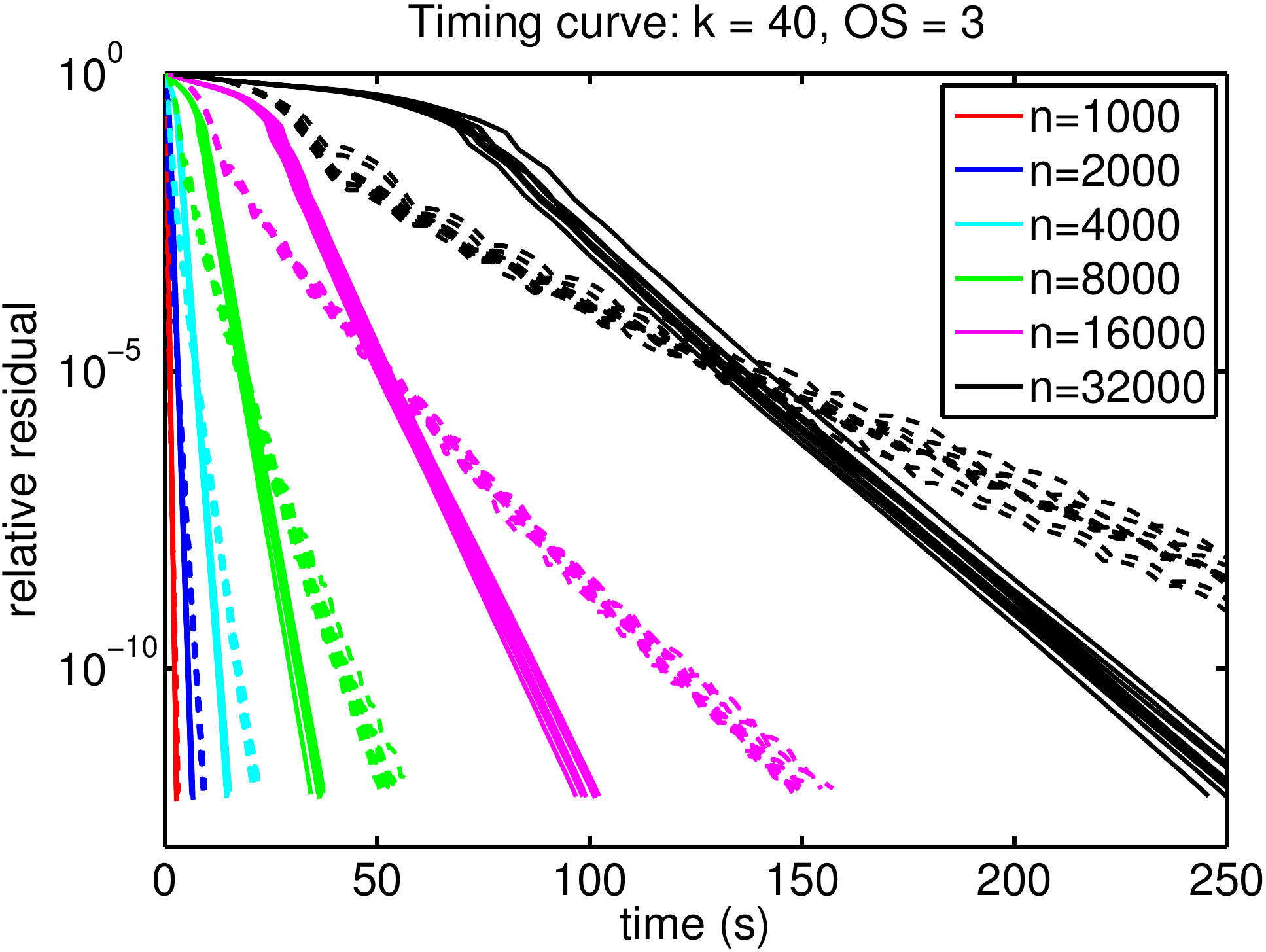}
  \end{minipage}
  \hfill
  \begin{minipage}[b]{0.49\linewidth}
    \centering
    \includegraphics[width=\linewidth]{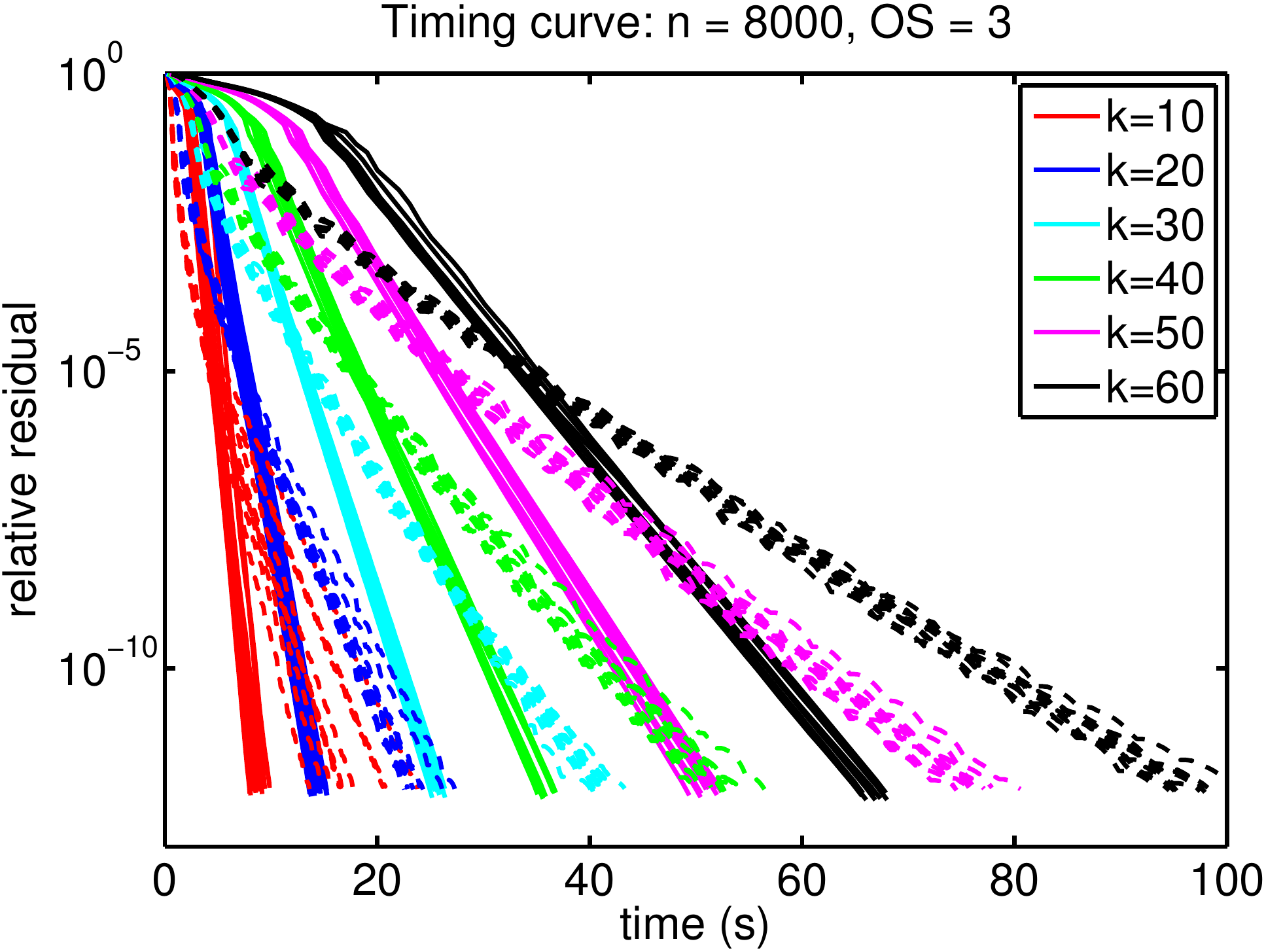}
  \end{minipage}  
  \caption{Timing curves for LMAFit (dashed line) and LRGeomCG (full line) for fixed oversampling of $\text{OS}=3$. Left: variable size $n$ and fixed rank $k=40$; right: variable ranks $k$ and fixed size $n=8000$.}\label{fig:times_rank_known}
\end{figure}

\begin{table}
  \renewcommand{\arraystretch}{1.2}
	\footnotesize
	\caption{The mean of the computational results for Figure~\ref{fig:its_rank_known}--\ref{fig:times_rank_known} for solving with a tolerance of $10^{-12}$.}\label{tab:conv_rank_known}
  \centering%
\begin{tabular}{@{}c cc cc  l c cc cc@{}}
 \toprule
  \multicolumn{5}{c}{Fixed rank $k=40$} & \phantom{ab} & \multicolumn{5}{c}{Fixed size $n=8000$} \\ \cmidrule(r){1-5} \cmidrule(l){7-11}
  \multirow{2}{*}{$n$}       & \multicolumn{2}{c}{LMAFit} & \multicolumn{2}{c}{LRGeomCG} &   & \multicolumn{2}{c}{LMAFit} & \multicolumn{2}{c}{LRGeomCG} \\
         & time(s.)   & \#its. & time(s.) & \#its. &  & \phantom{00}$k$\phantom{00}  & time(s.)   &  \#its. & time(s.) & \#its.     \\ \midrule
  1000   & 3.11    & 133  & {\bf 2.74}  & 54.5   &&  10    &  17.6  &  518  &  {\bf 8.81}   &  121\\
  2000   & 9.13    & 175  & {\bf 6.43}  & 61.3   &&  20    &  23.9  &  297  &  {\bf 14.6}   &  86.5 \\
  4000   & 21.2    & 191  & {\bf 14.8}  & 66.7   &&  30    &  40.9  &  232  &  {\bf 25.8}   &  76.1 \\
  8000   & 53.3    & 211  & {\bf 36.5}  & 71.7   &&  40    &  52.7  &  211  &  {\bf 35.8}   &  71.7 \\
  16000  & 150     & 222  & {\bf 99.1}  & 75.4   &&  50    &  76.1  &  194  &  {\bf 51.2}   &  67.7 \\
  32000  & 383     & 233  & {\bf 254}   & 79.1   &&  60    &  96.6  &  186  &  {\bf 67.0}   &  66.1 \\  \bottomrule
\end{tabular}  
\end{table}

\subsection{Hybrid strategy}

The experimental results from above clearly show that the transient behavior of LRGeomCG's convergence is detrimental if only modest accuracy is required. The reason for this slow phase is the lack of non-linear CG acceleration ($\beta$ in Algorithm~\ref{alg:beta_adjust} is almost always zero) which essentially reduces LRGeomCG to a steepest descent algorithm. On the other hand, LMAFit is much less affected by slow convergence during the initial phase.

A simple heuristic to overcome this bad phase is a hybrid solver: For the first $I$ iterations, we use LMAFit; after that, we hope that the non-linear CG acceleration kicks in immediately and we can efficiently solve with LRGeomCG. In Figure~\ref{fig:times_hybrid}, we have tested this strategy  for different values of $I$ on a problem of the previous section with $n=16000$ and $k=40$.

From the left panel of Figure~\ref{fig:times_hybrid}, we get that the run time to solve for a tolerance of $10^{-12}$ is reduced from 97 sec.~(LRGeomCG) and 147 sec.~(LMAFit) to about 80 sec.~for the choices $I=10,20,30,40$. Performing only one iteration of LMAFit is less effective. 
For $I=20$, the hybrid strategy has almost no transient behavior anymore and it is always faster than LRGeomCG or LMAFit alone. Furthermore, the choice of $I$ is not very sensitive to the performance as long as it is between 10 and 40, and already for $I=10$, there is a significant speedup of LRGeomCG noticeable for all tolerances.

The quantity $\beta$ of PR+ in Algorithm~\ref{alg:beta_adjust} is plotted in the right panel of Figure~\ref{fig:times_hybrid}. Until the 20th iteration, plain LRGeomCG shows no meaningful CG acceleration. For the hybrid strategies with $I>10$, the acceleration kicks in almost immediately. Observe that all approaches converge to $\beta\simeq 0.4$.

While this experiment shows that there is potential to speed up LRGeomCG in the early phase, we do not wish to claim that the present strategy is very robust nor directly applicable. The main point that we wish to make, however, is that the slow phase can be avoided in a relatively straightforward way, and that LRGeomCG can be warm started by any other method. In particular, this shows that LRGeomCG can be efficiently employed once the correct rank of the solution $X$ is identified by some other method. As explained in the introduction, there are numerous other methods for low-rank matrix completion that can reliably identify this rank but have a slow asymptotic convergence. Combining them with LRGeomCG seems like a promising way forward. 

\begin{figure}
  \begin{minipage}[b]{0.49\linewidth}
    \centering
    \includegraphics[width=\linewidth]{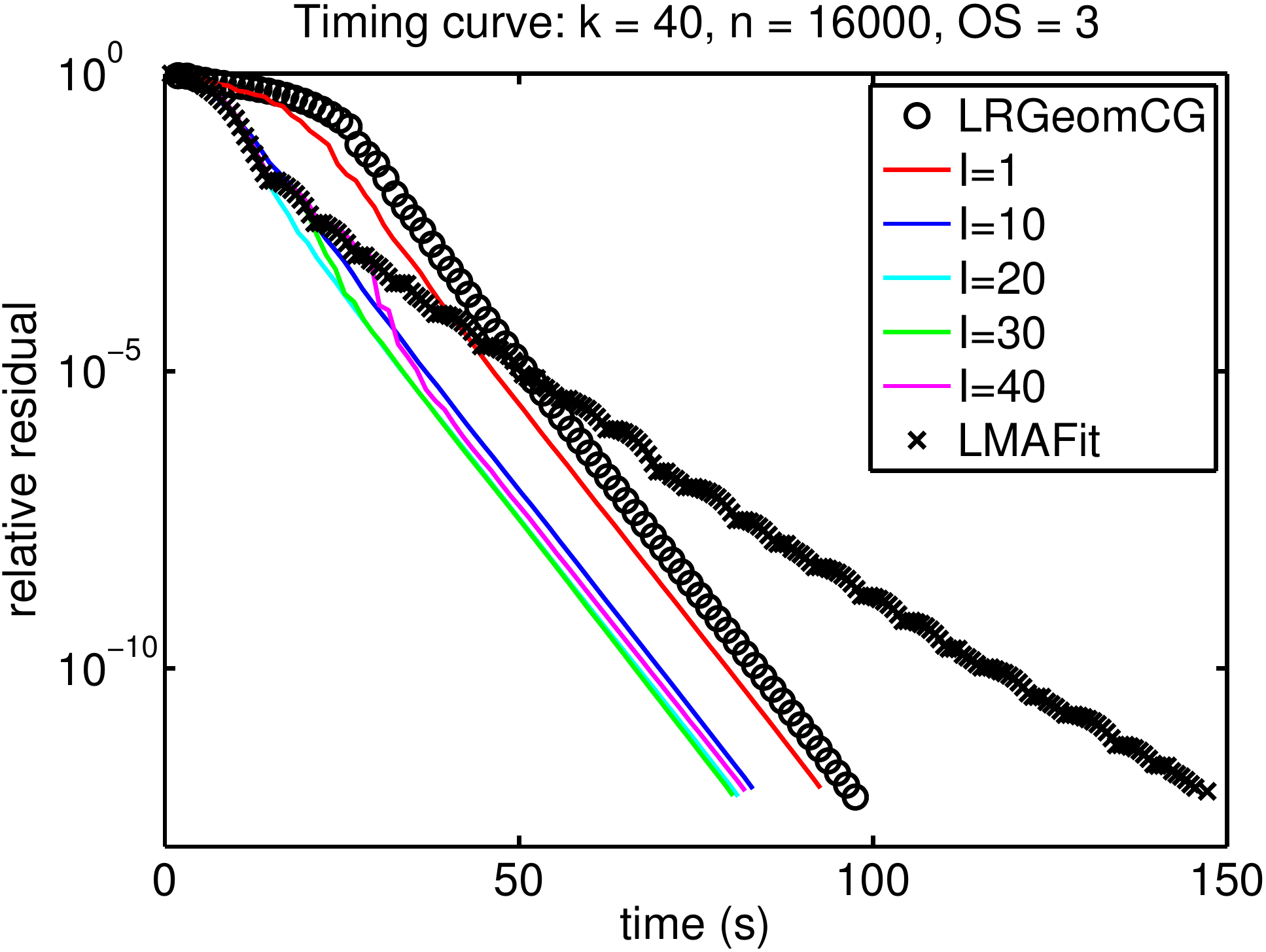}
  \end{minipage}
  \hfill
  \begin{minipage}[b]{0.49\linewidth}
    \centering
    \includegraphics[width=\linewidth]{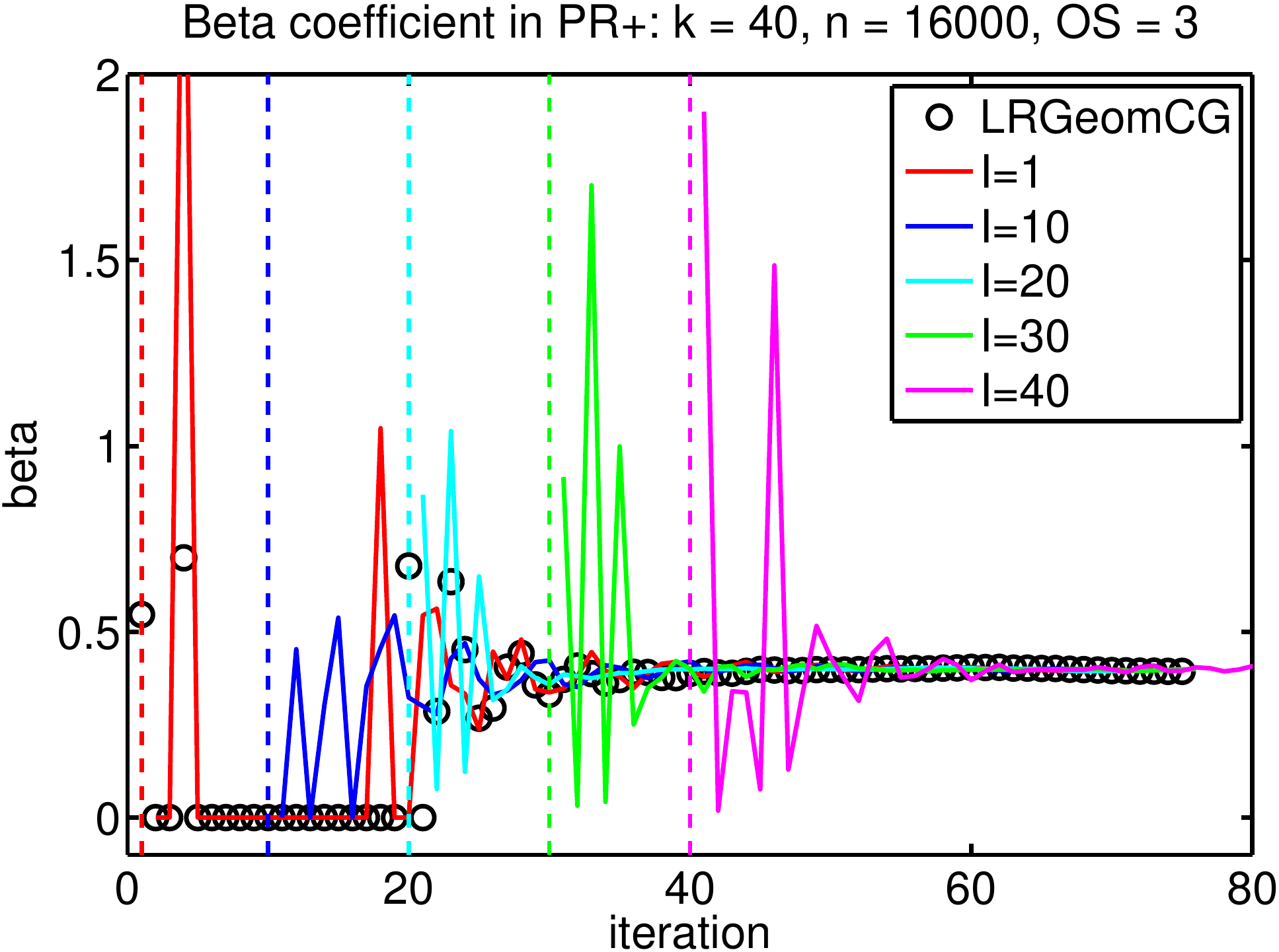}
  \end{minipage}  
  \caption{Timing curve and $\beta$ coefficient of the hybrid strategy for different values of $I$.}\label{fig:times_hybrid}
\end{figure}

\subsection{Influence of oversampling}\label{sec:exp_oversampling}

Next, we investigate the influence of oversampling on the convergence speed. We took 10 random problems with fixed rank and size, but vary the oversampling factor $\text{OS}$ using 50 values between $1, \ldots, 15$. In Figure~\ref{fig:times_fr} on the left, the asymptotic convergence factor $\rho$ is visible for LMAFit and LRGeomCG for three different combinations of the size and rank. Since the convergence is linear and we want to filter out any transient behavior, this factor was computed as
\[
\rho = \left( \frac{ \normF{\proj_{\Omega}(X_{i_{\text{end}}} - A)} }{ \normF{\proj_{\Omega}(X_{10} - A)} } \right)^{1/( i_{\text{end}} - 10 )}
\]
where $i_{\text{end}}$ indicates the last iteration. A factor of one indicates failure to converge within $4000$ steps. One can observe that all methods become slower as $\text{OS} \to 2$. Further, the convergence factor of LMAFit seems to stagnate for large values of $\text{OS}$ while LRGeomCG's actually becomes better. Since for growing $\text{OS}$ the completion problems become easier as more entries are available, only LRGeomCG shows the expected behavior. In contrast to our parameter-free choice of non-linear CG, LMAFit needs to determine and adjust a certain acceleration factor dynamically based on the performance of the iteration. We believe the stagnation in Figure~\ref{fig:times_fr} on the left is due to a suboptimal choice of this factor in LMAFit. 

In the right panel of Figure~\ref{fig:times_fr}, the mean time to decrease the relative residual by a factor of $10$ is displayed. These timings were again determined by neglecting the first 10 iterations and then interpolating the time needed for a reduction of the residual by $10$. Similarly as before, we can observe that since the cost per iteration is cheaper for LMAFit, there is a range for $\text{OS}$ around $7 \ldots 9$ where LMAFit is only slightly slower than LRGeomCG. However, for smaller and larger values of $\text{OS}$, LRGeomCG is always faster by a significant margin (observe the logarithmic scale).

 \begin{figure}
   \begin{minipage}[b]{0.49\linewidth}
     \centering
     \includegraphics[width=\linewidth]{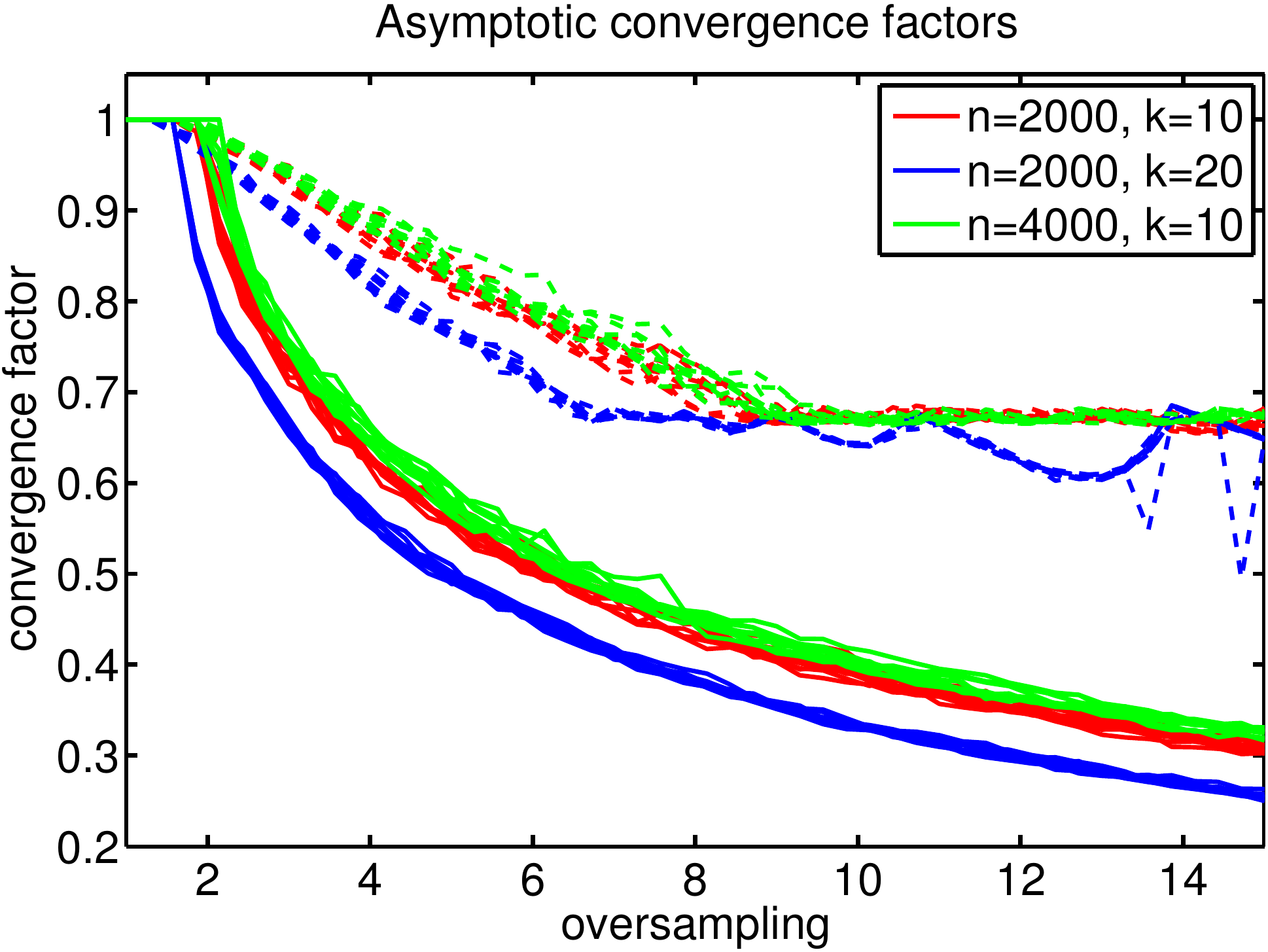}
   \end{minipage}
   \hfill
   \begin{minipage}[b]{0.49\linewidth}
     \centering
     \includegraphics[width=\linewidth]{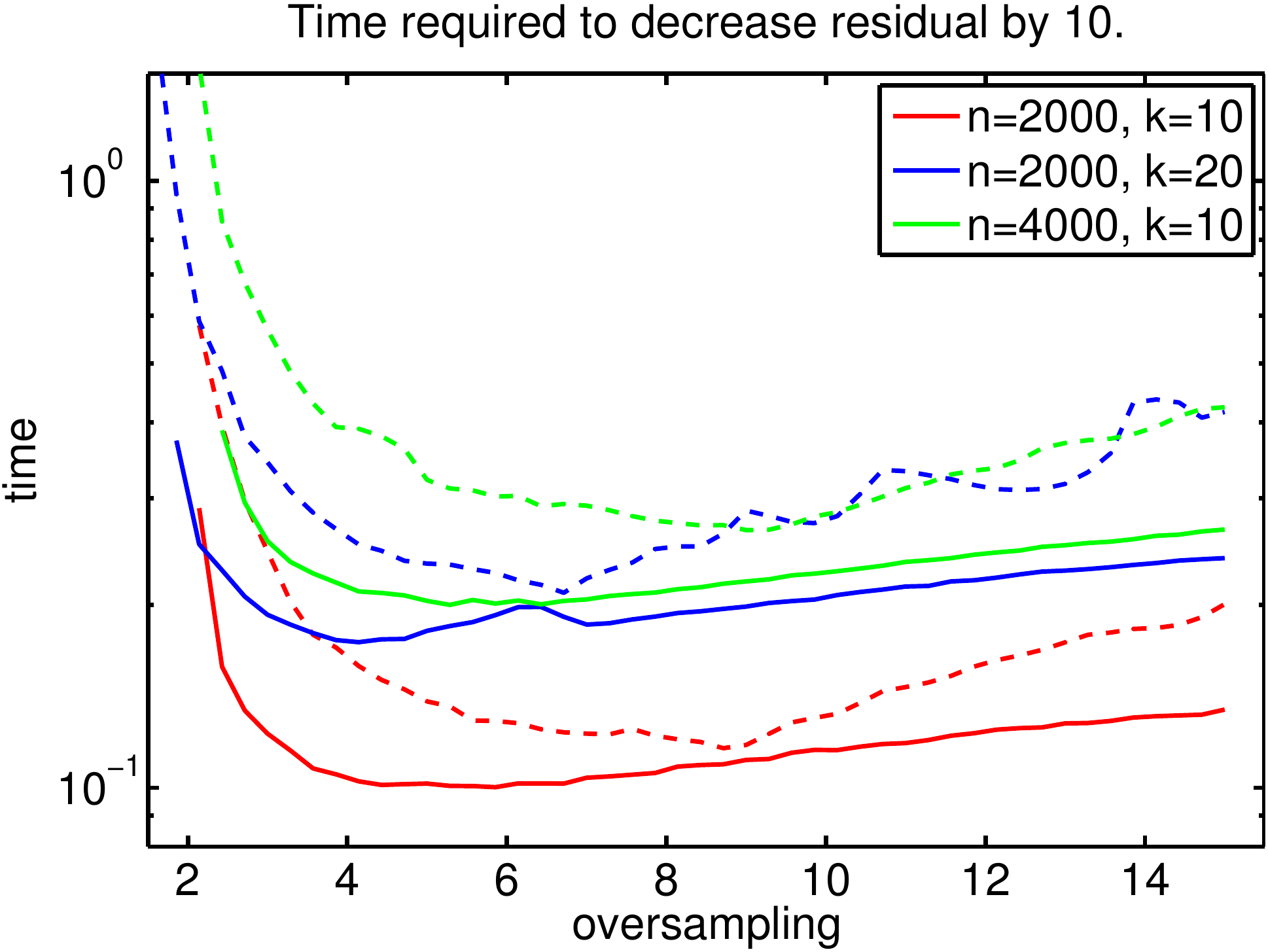}
   \end{minipage}  
   \caption{Asymptotic convergence factor $\rho$ (left) and mean time to decrease the error by a factor of 10 (right) for LMAFit (dashed line) and LRGeomCG (full line) in function of the oversampling rate $\text{OS}$.}\label{fig:times_fr}
 \end{figure}

\subsection{Influence of noise}\label{sec:exp_noise}

Next, we investigate the influence of noise by adding random perturbations to the rank-$k$ matrix $A$. We define the {noisy matrix} $A^{(\varepsilon)}$ with {noise level} $\varepsilon$ as 
\[
 A^{(\varepsilon)} \defeq A + \varepsilon \, \frac{ \normF{A_{\Omega}}}{\normF{N_{\Omega}}} \, N,
\]
where $N$ is a standard Gaussian matrix and $\Omega$ is the usual random sampling set (see also \cite{Toh:2010,Wen:2010} for a similar setup). The reason for defining $A^{(\varepsilon)}$ in this way is that we have
\[
 \normF{\proj_\Omega(A - A^{(\varepsilon)})} = \varepsilon \, \frac{ \normF{A_{\Omega}}}{\normF{N_{\Omega}}} \normF{\proj_\Omega(N)} \simeq \varepsilon\,\sqrt{|\Omega|k}.
\]
So, the best relative residual we may expect from an approximation $X^{\text{opt}} \simeq A$ is
\begin{equation}\label{eq:residual_noisy}
 \normF{\proj_\Omega(X^{\text{opt}} - A^{(\varepsilon)})} / \normF{\proj_\Omega(A)} \simeq \normF{\proj_\Omega(A - A^{(\varepsilon)})} / \normF{\proj_\Omega(A)} \simeq \varepsilon.
\end{equation}
Similarly, the best relative error of $X^{\text{opt}}$ should be on the order of the noise ratio too since
\begin{equation}\label{eq:error_noisy}
 \normF{A - A^{(\varepsilon)}} / \normF{A} \simeq \varepsilon.
\end{equation}

Due to the presence of noise, the relative residual \eqref{eq:rel_residual} cannot go to zero but the Riemannian gradient of our optimization problem can. In principle, this suffices to detect convergence of LRGeomCG. In practice, however, we have noticed that this wastes a lot of iterations in case the iteration stagnates. A simply remedy is to monitor
\begin{equation}\label{eq:stagnation}
  \text{relative change at step $i$} = \left\vert 1 - \sqrt{f(X_i)/f(X_{i-1})} \right\vert
\end{equation}  
and stop the iteration when this value drops below a certain threshold. After some experimentation, we fixed this threshold to $10^{-3}$. Such a stagnation detection is applied by most other low-rank completion solvers \cite{Toh:2010,Wen:2010}, although its specific form varies. Our choice coincides with the one from \cite{Wen:2010}, but we have lowered the threshold.

After equipping LMAFit and LRGeomCG with this stagnation detection, we can display the experimental results on Figure~\ref{fig:its_rank_noise} and Table~\ref{tab:conv_rank_known_noisy}. We have only reported on one choice for the rank, size and oversampling since the same conclusions can be reached for any other choice. Based on Figure~\ref{fig:its_rank_noise}, it is clear that the stagnation is effectively detected by both methods, except for the very noisy case of $\varepsilon=1$. (Recall that we changed the original stagnation detection procedure in LMAFit to ours of \eqref{eq:stagnation}, to be able to draw a fair comparison.)

Comparing the results with those of the previous section, the only difference is that the iteration stagnates when the error reaches the noise level. In particular, the iterations are undisturbed by the presence of the noise up until the very last iterations. In Table~\ref{tab:conv_rank_known_noisy}, one can observe that both methods solve all problems up to the noise level, which is the best that can be expected from \eqref{eq:residual_noisy} and \eqref{eq:error_noisy}. Again, LRGeomCG is faster when a higher accuracy is required, which, in this case, corresponds to small noise levels. Since the iteration is unaffected by the noise until the very end, the hybrid strategy of the previous section should be effective here too.

    \begin{figure}
      \centering
    \includegraphics[width=0.55\linewidth]{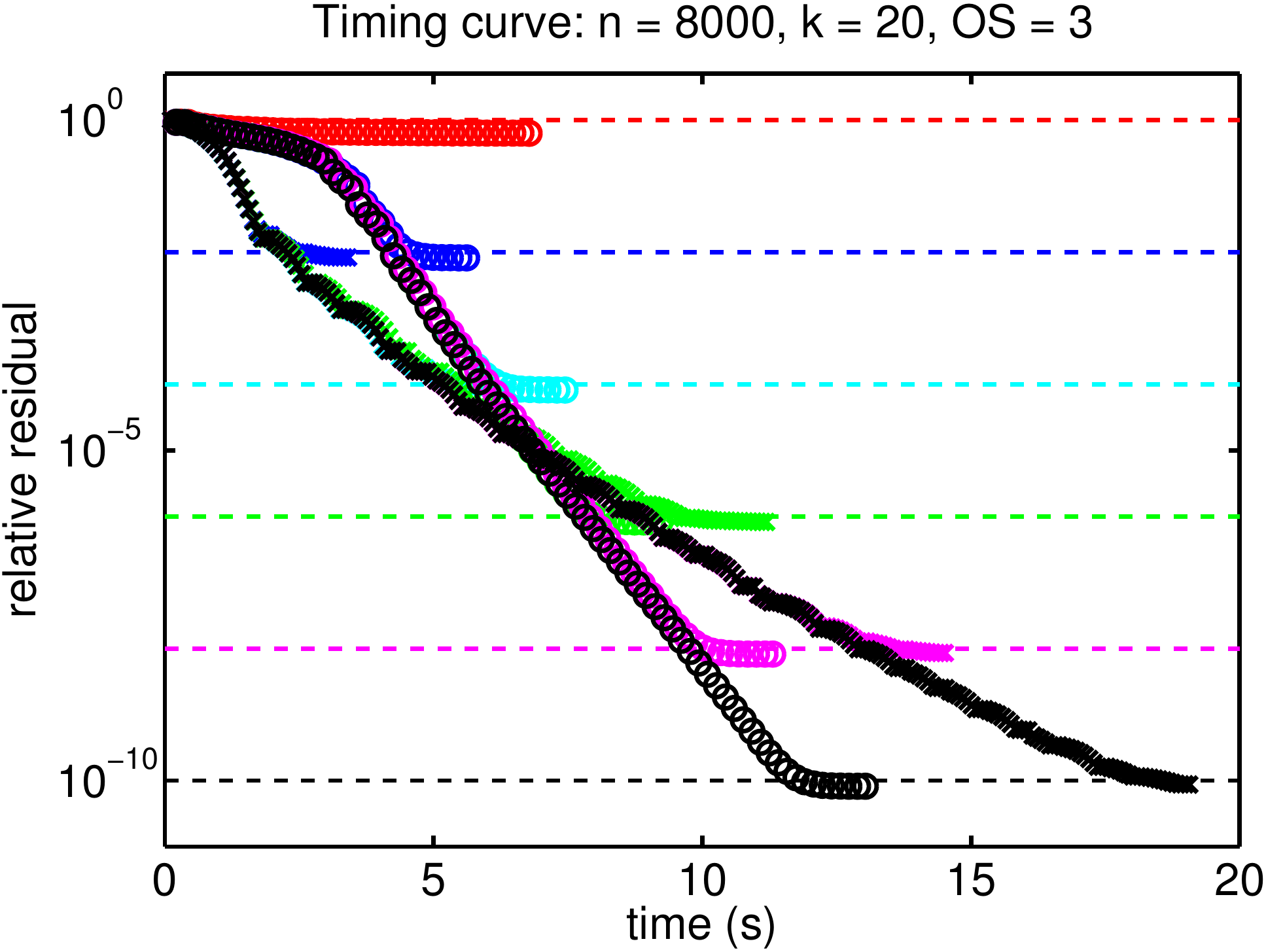}
    \caption{Timing curves for LMAFit (crosses) and  LRGeomCG (circles) for different noise levels with size $n=8000$, rank $k=20$, fixed oversampling of $\text{OS}=3$. The noise levels $\varepsilon$ are indicated by dashed lines in different color.}\label{fig:its_rank_noise}
  \end{figure}

  \begin{table}
    \renewcommand{\arraystretch}{1.2}
	\footnotesize
	  \caption{Computational results for Figure~\ref{fig:its_rank_noise}.}\label{tab:conv_rank_known_noisy}
    \centering%
  \begin{tabular}{@{}c l cccc l  cccc@{}}
    \toprule
    \multirow{2}{*}{$\varepsilon$}   & \phantom{a}    & \multicolumn{4}{c}{LMAFit} & \phantom{a} & \multicolumn{4}{c}{LRGeomCG}  \\ \cmidrule(r){3-6} \cmidrule(l){8-11}
      & & time(s.)  &  \#its. & error & residual  & & time(s.)  & \#its. & error & residual     \\\hline
    $10^{-0\phantom{0}}$  & & {\bf 2.6}    & 31   & $1.04\, \varepsilon$   &  $0.59\, \varepsilon$   & &  6.8    &  42  &  $1.52\, \varepsilon$ &  $0.63\, \varepsilon$  \\
    $10^{-2\phantom{0}}$  & & {\bf 3.4}   & 42   & $0.77\, \varepsilon$   &  $0.83\, \varepsilon$   & &  5.6    &  36  &  $0.72\, \varepsilon$ &  $0.82\, \varepsilon$  \\
    $10^{-4\phantom{0}}$  & & {\bf 6.4}    & 79  & $0.83\, \varepsilon$   &  $0.83\, \varepsilon$   & &  7.5    &  46  &  $0.72\, \varepsilon$ &  $0.82\, \varepsilon$  \\
    $10^{-6\phantom{0}}$  & & 11    & 133  & $0.74\, \varepsilon$   &  $0.82\, \varepsilon$   & &  {\bf 9.2}    &  58  &  $0.72\, \varepsilon$ &  $0.82\, \varepsilon$  \\
    $10^{-8\phantom{0}}$  & & 15    & 179  & $0.96\, \varepsilon$   &  $0.85\, \varepsilon$   & &  {\bf 11}    &  70  &  $0.72\, \varepsilon$ &  $0.82\, \varepsilon$  \\
    $10^{-10}$            & & 19    & 235  & $1.09\, \varepsilon$   &  $0.87\, \varepsilon$   & &  {\bf 13}    &  81  &  $0.72\, \varepsilon$ &  $0.82\, \varepsilon$  \\ \bottomrule
  \end{tabular}  
  \end{table}

\subsection{Exponentially decaying singular values}\label{sec:exp_decay}

In the previous problems, the rank of the matrices could be unambiguously defined. Even in the noisy case, there was still a sufficiently large gap between the original nonzero singular values and the ones affected by noise. In this section, we will complete a matrix for which the singular values decay but do not become exactly zero. 

Although a different setting than the previous problems, this type of matrices occurs frequently in the context of approximation of discretized functions or solutions of PDEs on tensorized domains; see, e.g., \cite{Kressner:2010c}. In this case, the problem of approximating discretized functions by low-rank matrices is primarily motivated by reducing the amount of data to store. On a continuos level, low-rank approximation corresponds in this setting to approximation by sums of separable functions. As such, one is typically interested in approximating the matrix up to a certain tolerance using the lowest rank possible.

The (exact) rank of a matrix is now better replaced by the numerical rank, or $\varepsilon$-rank \cite[Chapter 2.5.5]{Golub:1996}. Depending on the required accuracy $\varepsilon$, the $\varepsilon$-rank is the quantity
\begin{equation}\label{eq:eps_rank}
  k_{\varepsilon}(A) = \min_{\norm{A-B}_2 \leq \varepsilon} \rank(B).
\end{equation}
Obviously, when $\varepsilon=0$, one recovers the (exact) rank of a matrix. It is well known that the $\varepsilon$-rank of $A \in \RSpace^{m \times n}$ can be determined from the singular values $\sigma_i$ of $A$ as follows
\[
 \sigma_1 \geq \sigma_{k_{\varepsilon}} > \varepsilon \geq \sigma_{k_{\varepsilon}+1} \geq  \cdots \geq \sigma_p, \quad p = \min(m,n). 
\]

In the context of the approximation of bivariate functions, the relation \eqref{eq:eps_rank} is very useful in the following way. Let $f(x,y)$ be a function defined on a tensorized domain $(x,y) \in \mathcal{X} \times \mathcal{Y}$. After discretization of $x$ and $y$, we arrive at a matrix $A$. If we allow for an absolute error in the spectral norm of the size $\varepsilon$, then \eqref{eq:eps_rank} tells us that we can approximate $M$ by a rank $k_{\varepsilon}$ matrix. 

As example, we take the following simple bivariate function to complete:
\begin{equation}\label{eq:f_x_y}
 f(x,y) = \frac{1}{\sigma + \norm{x-y}^2_2}, 
\end{equation}
for some $\sigma > 0$. The parameter $\sigma$ controls the decay of the singular values. Such functions occur as two-point correlations related to second-order elliptic problems with stochastic source terms; see \cite{Harbrecht:2010}.

We ran LRGeomCG and LMAFit after discretizing \eqref{eq:f_x_y} with a matrix of size $n=8000$ using an oversampling factor \text{OS} of 8 (calculated for rank 20) and a decay of $\sigma=1$. Since a stopping condition on the residual has no meaning for this example, we only used a tolerance of $10^{-3}$ on the relative change \eqref{eq:stagnation} or a maximum of 500 iterations. In Figure~\ref{fig:rank_decay}, the final accuracy for different choices of the rank is visible for these two problem sets. 

The results labeled ``(no hom)'' use the standard strategy of starting with random initial guesses for each value of the rank $k$. Clearly, this results in a very unsatisfactory performance of LRGeomCG and LMAFit since the error of the completion does not decrease with increasing rank. Remark that we have measured the relative \emph{error} as
\[
 \| \proj_\Gamma(X_*-A) \|_\textrm{F} /  \| \proj_\Gamma(A) \|_\textrm{F}
\]
where $\Gamma$ is a random sampling set different from $\Omega$ but equally large.

  \begin{figure}[h!]
    \centering
  \includegraphics[width=0.55\linewidth]{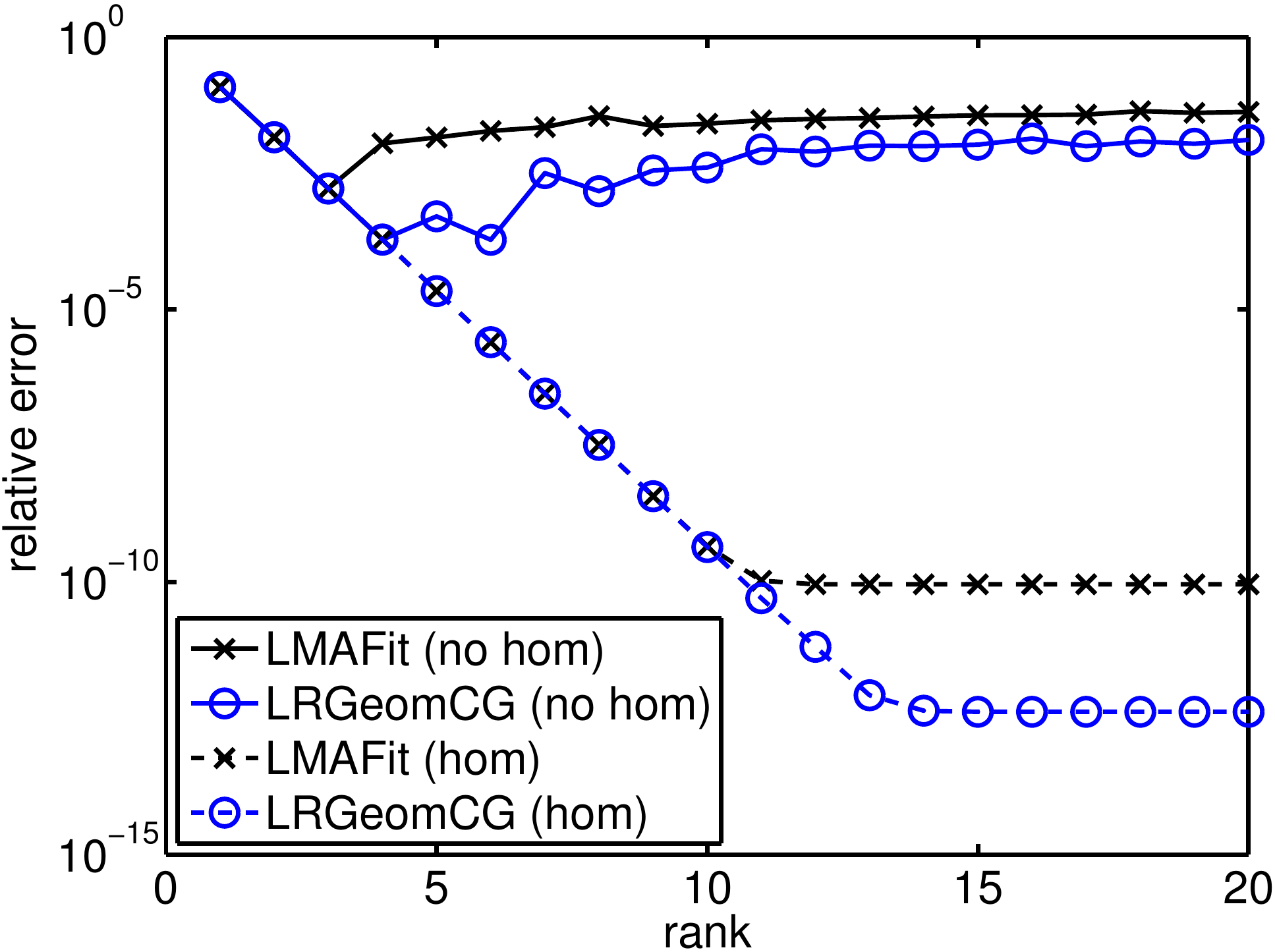}
  \caption{Relative error of LMAFit and  LRGeomCG after completion of a discretization of \eqref{eq:f_x_y} for OS=8 with and without a homotopy strategy.}\label{fig:rank_decay}
\end{figure}

The previous behavior is a clear example that the local optimizers of LMAFit and LRGeomCG are very far away from the global ones. However, there is a straightforward solution to this problem: Instead of taking a random initial guess for each $k$, we only take a random initial guess for $k=1$. For all other $k>1$, we use $X=U \Sigma V^T$ where
\[
 U = \begin{bmatrix} \widetilde U & u \end{bmatrix}, \quad  \Sigma = \begin{bmatrix} \widetilde \Sigma & 0 \\ 0 & \widetilde \Sigma_{k-1,k-1} \end{bmatrix}, \quad  V = \begin{bmatrix} \widetilde V & v \end{bmatrix},
\]
$\widetilde X= \widetilde U \widetilde \Sigma \widetilde V^T$ is the local optimizer for rank $k-1$ and $u$ and $v$ are random unit-norm vectors orthogonal to $U$ and $V$, respectively. We call this the \emph{homotopy strategy} which is also used in one of LMAFit's rank adaptivity strategies (but with zero vectors $u$ and $v$). The error using this homotopy strategy is labeled ``(hom)'' in Figure~\ref{fig:rank_decay} and it clearly performs much more favorably. In addition, LRGeomCG is able to obtain a smaller error.

\section{Conclusions}

The matrix completion consists of recovering a low-rank matrix based on a very sparse set of entries of this matrix. In this paper, we have presented a new method based on optimization on manifolds to solve large-scale matrix completion problems. Compared to most other existing methods in the literature, our approach consists of directly minimizing the least-square error of the fit directly over $\Manifold$, the set of matrices of rank $k$.

The main contribution of this paper is to show that the lack of vector space structure of $\Manifold$ does not need be an issue when optimizing over this set, and illustrating this for low-rank matrix completion. Using the framework of retraction-based optimization, the necessary expressions were derived in order to minimize any smooth objective function over the Riemannian manifold $\Manifold$. The geometry chosen for $\Manifold$, namely a submanifold embedded in the space of matrices, allowed for an efficient implementation of non-linear CG for matrix completion. Indeed, the numerical experiments illustrated that this approach is very competitive with state-of-the-art solvers for matrix completion. In particular, the method achieved very fast asymptotic convergence factors without any tuning of parameters thanks to a simple computation of the initial guess for the line search.

A drawback of the proposed approach is however that the rank of the manifold is fixed. Although there are several problems where choosing the rank is straightforward, an integrated manifold-based solution, like in \cite{Cason2011, Mishra:2011b}, is desirable.  In addition, our convergence proof relied on several safeguards and modifications in the algorithm that seem completely unnecessary in practice. Closing these gaps are currently topics of further research.

\section*{Acknowledgments}

Parts of this paper have been prepared while the author was affiliated with the Seminar of Applied Mathematics, ETH Z\"urich. In addition, he gratefully acknowledges the helpful discussions with Daniel Kressner regarding low-rank matrix completion.

\appendix

\section{Proof of Proposition~\ref{prop:Hessian}}\label{app}

The proof of Prop.~\ref{prop:Hessian} consists of two steps. First, we introduce a second-order retraction which allows us to use standard Euclidean derivatives to derive the Riemannian Hessian of any objective function. Then, we derive a second-order model of the specific function $f$ based on a second-order Taylor expansion using this retraction. This second-order model will give us the Riemannian Hessian.

\subsection{An explicit second-order retraction on $\Manifold$}

The Riemannian Hessian of an objective function $f$ is usually defined in terms of the Levi-Civita connection, but in case of an embedded submanifold it can also be defined by means of so-called second-order retractions. 

\emph{Second-order retractions} are defined as second-order approximations of the exponential map \cite[Proposition 3]{Absil:2012}. In case of embedded submanifolds, it is shown in \cite{Absil:2012} that they can be identified as retractions that additionally satisfy
\begin{equation}\label{eq:deriv_zero}
  \proj_{T_X \Manifold} \left( \frac{\deriv}{\deriv t^2}  R_X(t \xi) \Big|_{t=0} \right) = 0, \quad \text{for all $\xi \in T_X \Manifold$}.
\end{equation}
Now, the Riemannian Hessian operator, a symmetric and linear operator 
\[
  \Hess f(X): T_X \Manifold \to T_X \Manifold,
\]
can be determined from the classical Hessian of the function $f \circ R_X:  T_X \Manifold \to \RSpace$ defined on a vector space since
\[
 \Hess f(X) = \Hess (f \circ R_X)(0),
\]
where $R_X$ is a second-order retraction, see \cite[Proposition 5.5.5]{AMS2008}.

The next proposition gives a second-order retraction and the plan of this construction is as follows. First, we construct a smooth mapping $T_X\Manifold \to \RSpace^{m \times n}$; then, we prove that it satisfies the requirements in Definition~\ref{def:retr} and \eqref{eq:deriv_zero}; and finally, we expand in series. Since the metric projection \eqref{eq:R_X} can be shown to satisfy \eqref{eq:deriv_zero} also, we have in addition obtained a second-order Taylor series for \eqref{eq:R_X}.

\begin{prpstn}\label{prop:R2}
For any $X=U \Sigma V^T \in \Manifold$ satisfying \eqref{eq:x_USV} and $\xi \in T_X\Manifold$ satisfying \eqref{eq:Tspace_short}, the mapping $\soR_X$, given by
\begin{equation}\label{eq:R2}
 \soR_X : \TSpace_X\Manifold \rightarrow \Manifold, \quad \xi = UMV^T + U_p V^T + UV_p^T  \mapsto Z_U Z_V^T,
\end{equation}
where
\begin{align}
Z_U & \defeq U (\Sigma + \tfrac{1}{2} M - \tfrac{1}{8}M\Sigma^{-1}M)+ U_p(I_k - \tfrac{1}{2}\Sigma^{-1}M), \label{eq:ZU} \\
Z_V & \defeq V(I_k + \tfrac{1}{2}M^T\Sigma^{-T} - \tfrac{1}{8}M^T\Sigma^{-T}M^T\Sigma^{-T})  + V_p(\Sigma^{-T} - \tfrac{1}{2}\Sigma^{-T}M^T\Sigma^{-T}), \label{eq:ZV}
\end{align}
is a second-order approximation of the exponential map on $(g_X,\Manifold)$. Furthermore, we have
\begin{equation}\label{eq:R2_expansion}
 \soR_X(\xi) = X + \xi + U_p \Sigma^{-1} V_p^T + O(\norm{\xi}^3), \quad \norm{\xi} \to 0,
\end{equation}
and
\begin{equation}\label{eq:R2_Rx_error}
 \soR_X(\xi) - R_X(\xi) =  O(\norm{\xi}^3), \quad \norm{\xi} \to 0,
\end{equation}
with $R_X$ the metric projection \eqref{eq:R_X}.
\end{prpstn}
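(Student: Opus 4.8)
The plan is to follow the three-step outline announced before the statement: exhibit $\soR_X$ as a smooth map into $\Manifold$, verify that it meets the retraction axioms of Definition~\ref{def:retr} together with the second-order condition \eqref{eq:deriv_zero}, and then read off the low-order Taylor coefficients to obtain both \eqref{eq:R2_expansion} and, by comparison with the metric projection, \eqref{eq:R2_Rx_error}. I would begin with well-definedness. The factors $Z_U$ and $Z_V$ in \eqref{eq:ZU}--\eqref{eq:ZV} are polynomial in the entries of $M, U_p, V_p$ with coefficients built from $\Sigma^{-1}$ (here $\Sigma^{-T}=\Sigma^{-1}$ since $\Sigma$ is diagonal), so $\soR_X$ is $C^\infty$. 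At $\xi=0$ we have $Z_U=U\Sigma$ and $Z_V=V$, both of full column rank $k$; by continuity this persists for $\xi$ near $0$, so $Z_UZ_V^T$ has rank exactly $k$ and $\soR_X$ maps a neighbourhood of the zero vector into $\Manifold$.

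The core computation is to substitute $M\mapsto tM$, $U_p\mapsto tU_p$, $V_p\mapsto tV_p$ and expand $\soR_X(t\xi)=Z_U(t)Z_V(t)^T$ as a polynomial in $t$ of degree four. I would write $Z_U(t)=A_0+tA_1+t^2A_2$ and $Z_V(t)=B_0+tB_1+t^2B_2$ with $A_0=U\Sigma$, $B_0=V$, and collect the coefficients of $\soR_X(t\xi)$. The constant term is $A_0B_0^T=X$, giving axiom (2); the linear term $A_1B_0^T+A_0B_1^T$ collapses to $UMV^T+U_pV^T+UV_p^T=\xi$, giving axiom (3); and the quadratic term $A_2B_0^T+A_1B_1^T+A_0B_2^T$ simplifies, after cancellation of all the $UM\Sigma^{-1}MV^T$, $U_p\Sigma^{-1}MV^T$ and $UM\Sigma^{-1}V_p^T$ contributions, to exactly $U_p\Sigma^{-1}V_p^T$. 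Because the whole expansion is a polynomial in $\xi$, this already yields \eqref{eq:R2_expansion}, the remaining cubic and quartic terms being $O(\norm{\xi}^3)$. Moreover $U_p^TU=0$ and $V_p^TV=0$ place $U_p\Sigma^{-1}V_p^T$ in the $U_\perp(\cdot)V_\perp^T$ block, i.e.\ in the normal space $N_X\Manifold$ orthogonal to the tangent space \eqref{eq:Tspace_full}; hence $\proj_{T_X\Manifold}$ annihilates the second-order term, which is precisely \eqref{eq:deriv_zero}. Together with the retraction axioms this identifies $\soR_X$ as a second-order retraction, hence a second-order approximation of the exponential map by \cite[Prop.~3]{Absil:2012} (equivalently \cite[Prop.~5.5.5]{AMS2008}).

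For the comparison \eqref{eq:R2_Rx_error} I would argue geometrically rather than re-expand $R_X$ directly. Both $\soR_X$ and the metric projection $R_X$ are retractions, so they agree to first order; the point is that they also agree to second order. For any curve $\gamma$ on $\Manifold$ with $\gamma(0)=X$ and $\gamma'(0)=\xi$, the normal component $\proj_{N_X\Manifold}(\gamma''(0))$ equals the second fundamental form $\mathrm{II}(\xi,\xi)$, a curve-independent tensor. Thus for \emph{any} retraction the normal part of the quadratic Taylor coefficient is fixed, while \eqref{eq:deriv_zero} forces the tangential part to vanish; consequently all second-order retractions share the same quadratic term. Since $\soR_X$ is one, it remains only to know that $R_X$ is too: this follows from the fact that the residual $X+\xi-R_X(\xi)$ is normal to $\Manifold$ at $R_X(\xi)$, which is the content of the projection-retraction results behind \cite[Lemma~2.1]{Lewis2008} and \cite{Absil:2012}. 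Matching the two quadratic terms then gives $\soR_X(\xi)-R_X(\xi)=O(\norm{\xi}^3)$, and combining with \eqref{eq:R2_expansion} simultaneously produces the announced second-order Taylor series for the metric projection.

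The main obstacle is twofold. The algebra of the quadratic coefficient is lengthy but entirely mechanical, and the one genuinely delicate point is justifying that the metric projection satisfies \eqref{eq:deriv_zero}: this cannot be read off a closed form of $R_X$ (there is none near the singularities), and must instead be extracted from the normality of the projection residual, which is exactly why the argument is routed through the general projection-retraction theory rather than through a direct expansion of \eqref{eq:R_X}.
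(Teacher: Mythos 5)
Your core computation and overall architecture coincide with the paper's own proof: you expand $\soR_X(t\xi)=Z_U(t)Z_V(t)^T$ and identify the constant, linear, and quadratic coefficients as $X$, $\xi$, and $U_p\Sigma^{-1}V_p^T$ (the cancellations you describe are exactly right); you observe that the quadratic term is normal to $\TSpace_X\Manifold$, giving \eqref{eq:deriv_zero}; and you obtain \eqref{eq:R2_Rx_error} the same way the paper does, by invoking that the metric projection is itself a second-order retraction (via \cite[Lemma~2.1]{Lewis2008} and \cite{Absil:2012}) so that the two quadratic Taylor coefficients must agree. Your articulation of why all second-order retractions share the same quadratic term (normal part forced to be the second fundamental form, tangential part killed by \eqref{eq:deriv_zero}) is a correct and somewhat more explicit version of the paper's terse remark.

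The genuine gap is in your first step, the claim that ``$Z_U$ and $Z_V$ are polynomial in the entries of $M,U_p,V_p$, so $\soR_X$ is $C^\infty$.'' That argument gives smoothness of $\xi\mapsto\soR_X(\xi)$ only for each \emph{fixed} $X$, i.e.\ on each tangent space separately. But axiom (1) of Definition~\ref{def:retr} requires $R$ to be smooth on a neighborhood of $(\overline X,0)$ in the tangent bundle $\TSpace\Manifold$, jointly in $(X,\xi)$, and this is not automatic from \eqref{eq:R2}: the formula is expressed through a chosen factorization $X=U\Sigma V^T$ and the induced components $M,U_p,V_p$ of $\xi$, and neither the SVD factors nor these components can in general be chosen smoothly as functions of $(X,\xi)$ (there is the gauge freedom $U\mapsto UQ_U$, $V\mapsto VQ_V$, and the ordered SVD fails to be smooth where singular values coincide). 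One must show that the value $Z_UZ_V^T$ is independent of the factorization and glues into a smooth map on $\TSpace\Manifold$. The paper does precisely this: it proves the gauge invariance of \eqref{eq:R2} (allowing $\Sigma$ to be an arbitrary invertible $k\times k$ matrix), rewrites the map intrinsically as $\soR_X(\xi)=WX^\dagger W$ in Lemma~\ref{lem:R2_smooth}, and then combines Boman's theorem \cite[Theorem 1]{Boman:1967} with the existence of a smooth SVD along smooth curves \cite[Theorem 2.4]{Chern:2000} to get joint smoothness. This is not a pedantic point: the paper explicitly singles it out as what its proof adds over \cite[Theorem 1]{Shalit:2010}, where smoothness is obtained only tangent-space-by-tangent-space---exactly the situation your proposal leaves you in. Without this step you have not verified that $\soR$ is a retraction in the sense of Definition~\ref{def:retr}, and so you cannot yet invoke \cite[Proposition 3]{Absil:2012} to conclude that it is a second-order approximation of the exponential map.
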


Before proving Proposition~\ref{prop:R2}, we first state the following lemma that shows smoothness of $\soR_X$ in $\xi$ for \emph{fixed} $X$.
\begin{lmm}\label{lem:R2_smooth}
  Using the notation of Proposition~\ref{prop:R2}, we have
  \begin{equation}\label{eq:R2_wxw}
   \soR_X(\xi) = W X^\dagger W,
  \end{equation}
  with
  \begin{align*}
   W &\defeq X + \tfrac{1}{2} \proj_X^{\textrm{s}}(\xi) + \proj_X^{\textrm{p}}(\xi) \\
 & \quad- \tfrac{1}{8} \proj_X^{\textrm{s}}(\xi) X^\dagger \proj_X^{\textrm{s}}(\xi) - \tfrac{1}{2} \proj_X^{\textrm{p}}(\xi) X^\dagger \proj_X^{\textrm{s}}(\xi) - \tfrac{1}{2} \proj_X^{\textrm{s}}(\xi) X^\dagger \proj_X^{\textrm{p}}(\xi),
  \end{align*}
and
\begin{align}
  \proj_X^{\textrm{s}} &\colon \RSpace^{n \times m} \to \TSpace_X\Manifold \colon Z \mapsto \proj_U Z \proj_V, \label{eq:P1a}  \\
  \proj_X^{\textrm{p}} &\colon \RSpace^{n \times m} \to \TSpace_X\Manifold \colon Z \mapsto \proj_U^\perp Z \proj_V + \proj_U Z \proj_V^\perp. \label{eq:P2a}
\end{align}
  Furthermore, for fixed $X \in \Manifold$, the mapping $\soR_X(\xi)$ is smooth in $\xi$ as a mapping from $T_X \Manifold$ to $\RSpace^{m \times n}$. 
\end{lmm}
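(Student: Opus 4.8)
The plan is to establish the closed-form identity \eqref{eq:R2_wxw} by direct substitution, after which smoothness is immediate. First I would evaluate the two projections on the tangent vector $\xi = UMV^T + U_p V^T + UV_p^T$. Using $U^TU_p = 0$, $V^TV_p = 0$, $U^TU = I_k$ and $V^TV = I_k$, a short computation gives $\proj_X^{\textrm{s}}(\xi) = UMV^T$ and $\proj_X^{\textrm{p}}(\xi) = U_p V^T + UV_p^T$, so that $\xi = \proj_X^{\textrm{s}}(\xi) + \proj_X^{\textrm{p}}(\xi)$ is exactly the orthogonal splitting of the tangent space into its $UMV^T$ part and its $U_p V^T + UV_p^T$ part.

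Next I would substitute these expressions, together with $X = U\Sigma V^T$ and $X^\dagger = V\Sigma^{-1}U^T$, into the definition of $W$ and reduce each of the six terms using the same orthogonality relations. The three quadratic terms contract through $X^\dagger$: for instance $\proj_X^{\textrm{s}}(\xi) X^\dagger \proj_X^{\textrm{s}}(\xi) = UM\Sigma^{-1}MV^T$, while the two mixed terms produce $U_p\Sigma^{-1}MV^T$ and $UM\Sigma^{-1}V_p^T$. Collecting the result by its left and right factors, I expect to obtain the compact form
\[
 W = Z_U V^T + U\bigl(I_k - \tfrac{1}{2}M\Sigma^{-1}\bigr)V_p^T,
\]
where $Z_U$ is precisely the matrix in \eqref{eq:ZU}.

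The final algebraic step is to multiply out $WX^\dagger W$. Post-multiplying $W$ by $X^\dagger = V\Sigma^{-1}U^T$ annihilates the trailing $V_p^T$ term because $V_p^TV = 0$, collapsing the expression to $WX^\dagger = Z_U\Sigma^{-1}U^T$. Then $WX^\dagger W = Z_U\Sigma^{-1}(U^TW)$, and computing $U^TW$ (again using $U^TU = I_k$ and $U^TU_p = 0$) followed by left-multiplication by $\Sigma^{-1}$ should reproduce exactly $Z_V^T$ with $Z_V$ as in \eqref{eq:ZV}; the transpose symmetry between the bracketed factors of $Z_U$ and $Z_V$ is what makes the two sides coincide. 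This yields $\soR_X(\xi) = WX^\dagger W = Z_U Z_V^T$.

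Smoothness then follows for free: for fixed $X$ the maps $\proj_X^{\textrm{s}}$ and $\proj_X^{\textrm{p}}$ are linear in $\xi$ while $X$ and $X^\dagger$ are constant, so $W$ is a polynomial of degree at most two in $\xi$ and $WX^\dagger W$ is a polynomial of degree at most four; hence $\soR_X$ is $C^\infty$ from $\TSpace_X\Manifold$ into $\RSpace^{m \times n}$. I expect the only genuine difficulty to be the bookkeeping, namely keeping track of which of the many cross terms survive the orthogonality relations. The decisive simplification is that multiplication by $X^\dagger$ kills every factor carrying $U_p$ or $V_p$ on the inner side, which is exactly what reduces the fourth-degree product $WX^\dagger W$ back to the simple rank-$k$ form $Z_U Z_V^T$.
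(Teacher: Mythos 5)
Your proposal is correct and follows essentially the same route as the paper: substitute $\proj_X^{\textrm{s}}(\xi)=UMV^T$, $\proj_X^{\textrm{p}}(\xi)=U_pV^T+UV_p^T$ and $X^\dagger=V\Sigma^{-1}U^T$ into $W$, verify $WX^\dagger W=Z_UZ_V^T$ algebraically, and conclude smoothness for fixed $X$; your factorization $W=Z_UV^T+U(I_k-\tfrac12 M\Sigma^{-1})V_p^T$ followed by $WX^\dagger=Z_U\Sigma^{-1}U^T$ and $\Sigma^{-1}U^TW=Z_V^T$ is exactly the ``tedious but straightforward'' manipulation the paper leaves to the reader, carried out in a clean, checkable way. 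Your smoothness argument (for fixed $X$, $WX^\dagger W$ is a polynomial of degree at most four in $\xi$) is a valid and even slightly simpler variant of the paper's observation that the formula involves only smooth matrix operations.
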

\begin{proof}
Substituting  $X^\dagger = V\Sigma^{-1}U^T$ and the definitions \eqref{eq:P1a},\eqref{eq:P2a} for $\proj_X^{\textrm{s}},\proj_X^{\textrm{p}}$ in $W$ gives
\begin{align*}
 W &= U \Sigma V^T + \tfrac{1}{2} UMV^T + U_pV^T+V V_p^T - \tfrac{1}{8} UMV^TV\Sigma^{-1}U^TUMV^T \\
   &\phantom{=\ } - \tfrac{1}{2} (U_pV^T + U V_p^T) V\Sigma^{-1}U^TUMV^T - \tfrac{1}{2} UMV^TV\Sigma^{-1}U^TU_pV^T + U V_p^T, \\
   &= U \Sigma V^T + \tfrac{1}{2} UMV^T + U_pV^T+U V_p^T - \tfrac{1}{8} UM\Sigma^{-1}M  V^T \\
   &\phantom{=\ }- \tfrac{1}{2} U_p \Sigma^{-1}MV^T - \tfrac{1}{2} U M \Sigma^{-1} V_p^T.
\end{align*}
This shows the equivalence of \eqref{eq:R2_wxw} to \eqref{eq:R2} after some tedious but straightforward algebraic manipulations. 

Observe that \eqref{eq:R2_wxw} consists of only smooth operations involving $X$: taking transposes, multiplying matrices and inverting full rank matrices can all be expressed as rational functions without singularities in the matrix entries. From this, we have immediately smoothness of $\soR_X(\xi)$ in $\xi$. 
\qquad\end{proof}    

The previous lemma shows smoothness of $\soR_X: T_X\Manifold \to \RSpace^{m \times n}$. For Proposition~\ref{prop:R2}, we also need to prove local smoothness of $\soR$ as a function from the tangent bundle onto $\Manifold$. We will do this by relying on Boman's theorem \cite[Theorem 1]{Boman:1967}, which states that a function $f: \RSpace^d \to \RSpace$ is smooth if $f \circ \gamma$ is smooth along all smooth curves $\gamma: \RSpace \to \RSpace^d$.

In addition, for the proof below we will allow the matrix $\Sigma$ in definition \eqref{eq:R2} to be any full-rank matrix. This is possible because none of the derivations above relied on $\Sigma$ being diagonal and definition \eqref{eq:R2} for $\soR_X$ is independent of the choice of specific matrices $U,V,\Sigma$ in the factorization of $X$. Indeed, suppose that we would have chosen $\widetilde{U} = UQ_U$, $\widetilde{V} = VQ_V$ and $\widetilde{\Sigma} = Q_U^T \Sigma Q_V$ as factorization matrices. Then the coefficient matrices of the tangent vector $\xi$ would have been $\widetilde{M} = Q_U^T M Q_V$, $\widetilde{U}_p = U_pQ_V$ and $\widetilde{V} = V_pQ_U$. Now, it is straightforward to check that the corresponding expressions for \eqref{eq:ZU}--\eqref{eq:ZV} become $Z_{\widetilde{U}} = Z_U Q_V$ and $Z_{\widetilde{V}} = Z_V Q_V$. Hence, $\soR_X(\xi) = Z_{\widetilde{U}} Z_{\widetilde{V}}^T = Z_U Z_V^T$ stays the same.

{\em Proof of Proposition~\ref{prop:R2}.}
  First, we show \eqref{eq:R2_expansion}. Elementary manipulation using \eqref{eq:ZU}--\eqref{eq:ZV} reveals that
\begin{align*}
  \soR_X(\xi) 
   &= \begin{bmatrix} U & U_p \end{bmatrix} 
   \begin{bmatrix} \Sigma + M + O(\norm{M}^3) & I +  O(\norm{M}^2) \\ I +  O(\norm{M}^2) & \Sigma^{-1} + O(\norm{M}) \end{bmatrix} 
   \begin{bmatrix} V & V_p \end{bmatrix}^T, \quad \norm{M} \to 0.
  \end{align*}
  For fixed $X$, the norms $\norm{M},\norm{U_p}, \norm{V_p}$ are all $O(\norm{\xi})$, and so
  \[
  \soR_X(\xi) = U(\Sigma + M) V^T + U V_p^T + U_p V^T + U_p \Sigma^{-1} V_p ^T + O(\norm{\xi}^3), \quad \norm{\xi} \to 0,
  \]
  which can be rewritten as \eqref{eq:R2_expansion}. Next, we show that $\soR_X$ is a retraction according to Definition~\ref{def:retr}. By construction, the image of $R_X$ is a matrix $Z_{U}Z_{V}^T$ of rank at most $k$. Since  $\rank(X)$ is continuous in $X$, one can take $\delta_t > 0$ sufficiently small such that the image of $t\mapsto \soR_X(t\xi)$ is always of rank $k$ for all $t \in (-\delta_t, \delta_t)$. 

Finally, we establish smoothness of $\soR_X$ in a neighborhood of $(X,0) \in \TSpace\Manifold$. Take any $(Y,\xi) \in T\Manifold$ sufficiently close to $(X,0)$ and consider any smooth curve $\gamma: \RSpace \to T\Manifold$ that connects $(X,0)$ to $(Y,\xi)$. Consider the partitioning $\gamma(t) = (\alpha(t),\beta(t))$ with $\alpha$ a smooth curve on $\Manifold$ and $\beta$ on $T_{\alpha(t)}\Manifold$. By the existence of a smooth SVD for fixed-rank matrices \cite[Theorem 2.4]{Chern:2000}, there exists smooth $U(t), V(t), \Sigma(t)$ (like mentioned above, $\Sigma(s)$ is now a full-rank matrix) such that $\alpha(t) = U(t) \Sigma(t) V(t)^T$. Furthermore by Lemma~\ref{lem:R2_smooth} map $\soR_{\alpha(t)}(\xi)$ is smooth in $\xi$ around zero, so it will be smooth along $\soR_{\alpha(t)}(\beta(t))$. Then, the expression $R_{\alpha(t)}(\beta(t))$ consists of smooth matrix operations that depend smoothly on $t$. Since the curve $\gamma(t) = (\alpha(t),\beta(t))$ was chosen arbitrarily, we have shown smoothness of $R_X(\xi)$ on the tangent bundle in a neighborhood of $(X,0)$. To conclude, observe from \eqref{eq:R2_expansion} that $\soR_X$ obeys \eqref{eq:deriv_zero} since $U_p \Sigma^{-1} V_p^T$ is orthogonal to the tangent space. This makes $\soR_X$ a second-order retraction.
\qquad\endproof

Although parts of the previous results can also be found in \cite[Theorem 1]{Shalit:2010}, we prove additionally that $\soR_X$ satisfies Definition~\ref{def:retr} by showing smoothness on the whole tangent bundle (and not only for each tangent space separately, as in \cite{Shalit:2010}).

\subsection{Second-order model of $f$ on $\Manifold$}

Based on the Taylor series of $\soR_X$, it is now straightforward to derive the second-order model
\[
 m(X) = f(X) + \innprod{\Grad f(X)}{\xi} + \frac{1}{2}\innprod{\Hess f(X)[\xi]}{\xi}
\]
of any smooth objective function $f: \Manifold \to \RSpace$. In this section, we will illustrate this with our objective function $f(X) = \frac{1}{2}\normF{\proj_\Omega(X-A)}^2$.

Expanding $f(\soR_X(\xi))$ using \eqref{eq:R2_expansion} gives for $\norm{\xi} \to 0$
\begin{align*}
 f(\soR_X(\xi)) &= \frac{1}{2} \normF{ \proj_\Omega(\soR_X(\xi)-A) }^2 \\
 &= \frac{1}{2} \normF{ \proj_\Omega(X+\xi + U_p \Sigma^{-1} V_p^T-A) }^2 + O(\norm{\xi}^3) \\
 &= \frac{1}{2} \Tr[ \proj_\Omega(X-A)^T \proj_\Omega(X-A) + 2 \proj_\Omega(X-A)^T \proj_\Omega(\xi) \\
 &\quad + 2 \proj_\Omega(X-A)^T \proj_\Omega( U_p \Sigma^{-1} V_p^T ) + \proj_\Omega(\xi)^T \proj_\Omega(\xi) ] + O(\norm{\xi}^3).
\end{align*}
We can recognize the constant term 
\[
 f(X) = \frac{1}{2} \Tr[ \proj_\Omega(X-A)^T \proj_\Omega(X-A) ],
\]
the linear term
\begin{equation}\label{eq:id_grad}
  \innprod{\Grad f(X)}{\xi} = \Tr[ \proj_\Omega(X-A)^T \proj_\Omega(\xi) ],
\end{equation}
and the quadratic term
\begin{equation}\label{eq:id_hess}
 \innprod{\Hess f(X)[\xi]}{\xi} =  \Tr[ 2 \proj_\Omega(X-A)^T \proj_\Omega( U_p \Sigma^{-1} V_p^T ) + \proj_\Omega(\xi)^T \proj_\Omega(\xi) ].
\end{equation}

Using  \eqref{eq:riem_gradient}, we see that our expression for the Riemannian gradient, $ \Grad f(x) = \projTxM (\proj_\Omega(X-A))$, indeed satisfies \eqref{eq:id_grad}. This only leaves determining the Riemannian Hessian as the symmetric and linear mapping
\[
 \Hess f(X) : \TSpace_X\Manifold \rightarrow \TSpace_X\Manifold, \quad \xi \mapsto \Hess f(X)[\xi],
\]
that satisfies the inner product \eqref{eq:id_hess}. First note that $U_p \Sigma^{-1} V_p^T$ is quadratic in $\xi$ since
\[
 U_p \Sigma^{-1} V_p^T = (U_p V^T + UV_p^T) (V \Sigma^{-1} U^T) (U_p V^T + UV_p^T) = \proj_X^{\textrm{p}}(\xi) X^\dagger \proj_X^{\textrm{p}}(\xi).
\]
Now we get
\begin{align*}
 \innprod{\Hess f(X)[\xi]}{\xi} &=  2\innprod{\proj_\Omega(X-A)}{\proj_\Omega( \proj_X^{\textrm{p}}(\xi) X^\dagger \proj_X^{\textrm{p}}(\xi) )} + \innprod{\proj_\Omega(\xi)}{\proj_\Omega(\xi)} \\
&=  \underbrace{2\innprod{\proj_\Omega(X-A)}{\proj_X^{\textrm{p}}(\xi) X^\dagger \proj_X^{\textrm{p}}(\xi)}}_{f_1:=\innprod{\calH_1(\xi)}{\xi}} + \underbrace{\innprod{\proj_\Omega(\xi)}{\xi}.}_{f_2:=\innprod{\calH_2(\xi)}{\xi}}
\end{align*}
So, the inner product with the Hessian consists of two parts, $f_1$ and $f_2$, and the Hessian itself is the sum of two operators, $\mathcal{H}_1$ and $\mathcal{H}_2$. The second contribution to the Hessian is readily available from $f_2$:
\[
 f_2 = \innprod{\proj_\Omega(\xi)}{\xi} = \innprod{\proj_\Omega(\xi)}{\projTxM(\xi)} = \innprod{\projTxMp \proj_\Omega(\xi)}{\xi},
\]
and so 
\[
 \calH_2(\xi) := \projTxM \proj_\Omega(\xi) = \proj_U \proj_\Omega(\xi) \proj_V + \proj_U^\perp \proj_\Omega(\xi) \proj_V + \proj_U \proj_\Omega(\xi) \proj_V^\perp.
\]
In $f_1$ we still need to separate $\xi$ to one side of the inner product. Since we can choose whether we bring over the first $\projTxMp(\xi)$ or the second, we introduce a constant $c \in \RSpace$,
\begin{align*}
 f_1 &= 2 c \ \innprod{\proj_\Omega(X-M) \cdot (X^\dagger \projTxMp(\xi))^T}{\projTxMp(\xi)} \\
  &\quad + 2 (1-c) \ \innprod{(\projTxMp(\xi) X^\dagger )^T \cdot \proj_\Omega(X-A)}{\projTxMp(\xi)} \\
  &= 2 c \ \innprod{\projTxMp [ \proj_\Omega(X-A) \cdot \projTxMp(\xi)^T \cdot (X^\dagger)^T ] }{\xi} \\
  &\quad + 2 (1-c) \ \innprod{\projTxMp [ (X^\dagger)^T \cdot \projTxMp(\xi)^T \cdot \proj_\Omega(X-A) ]}{\xi}. 
\end{align*}
The operator $\calH_1$ is clearly linear. Imposing the symmetry requirement $\innprod{\calH_1(\xi)}{\nu} = \innprod{\xi}{\calH_1(\nu)}$ for any arbitrary tangent vector $\nu$, we get that $c=1/2$, and so
\begin{align*} 
 \calH_1(\xi) &=  \projTxMp [ \proj_\Omega(X-A) \projTxMp(\xi)^T (X^\dagger)^T +  (X^\dagger)^T \projTxMp(\xi)^T \proj_\Omega(X-A) ] \\
               &= \proj_U^\perp \proj_\Omega(X-A) V_p \Sigma^{-1} V^T + U \Sigma^{-1}U_p^T\proj_\Omega(X-A)\proj_V^\perp.
\end{align*}
 Finally, we can put together the whole Hessian
\begin{align*}
 \Hess f(X)[\xi] &= \proj_U \proj_\Omega(\xi) \proj_V  + \proj_U^\perp (\proj_\Omega(\xi) + \proj_\Omega(X-A) V_p \Sigma^{-1} V^T )  \proj_V \\
 &\quad + \proj_U (\proj_\Omega(\xi) + U \Sigma^{-1}U_p^T\proj_\Omega(X-A))\proj_V^\perp,
\end{align*}
and this also proves Prop.~\ref{prop:Hessian}.

\bibliographystyle{siam}

\bibliography{bibexport}

\end{document}